\DeclareMathAlphabet{\mathcal}{OMS}{cmsy}{m}{n}
\newtheorem{theorem}{Theorem}[section]
\newtheorem{lemma}{Lemma}[section]
\newtheorem{proposition}{Proposition}[section]
\newtheorem{remark}{Remark}[section]
\begin{document}

\title[The Graph and Range Singularity Spectra of Random wavelet series]{The Graph and Range Singularity Spectra of Random Wavelet Series built from Gibbs measures}

\author{Xiong Jin}
\address{INRIA Rocquencourt, B.P. 105, 78153 Le Chesnay Cedex, France}
\email{xiongjin82@gmail.com}

\begin{abstract}
We consider multifractal random wavelet series built from Gibbs measures, and study the singularity spectra associated with the graph and range of these functions restricted to their iso-H\"older sets. To obtain these singularity spectra, we use a family of Gibbs measures defined on a sequence of topologically transitive subshift of finite type whose Hausdorff distance to the set of zeros of the mother wavelet tends to $0$.

\end{abstract}

\maketitle

\section{Introduction}

\subsection{The graph and range singularity spectra}

Given a function $f:[0,1]\mapsto \mathbb{R}$ and a subset $E\subset [0,1]$, the sets on the graph and range of $f$ over $E$ are defined by
\[
G_f(E)=\{(x,f(x)):x\in E\} \text{ and } R_f(E)=\{f(x):x\in E\}.
\]
It is classical in probability and geometric measure theories to study the Hausdorff dimension of these sets for non smooth functions. The first works on these questions can be traced back to L\'evy~\cite{Levy} and Taylor~\cite{Tay}, regarding the Hausdorff dimension and the Hausdorff measure of the range of Brownian motion. Since then, many progresses have been made for fractional Brownian motions, stable L\'evy processes and many other processes and functions, see~\cite{BlGe,Horo,Pru,Be,Ka,MaWi,PU,Urb,BeUr,HuLau,Hunt,Rou,Xiao2,Fe,DeFa} and others. In these contexts, the H\"older regularity of the function plays an important role. A nature way to describe such a regularity is the pointwise H\"older exponent:
\[
h_f(x)=\liminf_{r\to 0^+} \frac{1}{\log r} \log \left(\sup_{s,t\in B(x,r)} \big|f(s)-f(t)\big|\right).
\]
It is known that the minimal value of $h_f$ over the set $E$ provides upper bounds of the Hausdorff dimension of $G_f(E)$ and $R_f(E)$. In~\cite{Jin}, as a generalization of Lemma 8.2.1 in~\cite{Adl}, Theorem 6 of Chapter 10 in~\cite{Ka} and Lemma 2.2 in~\cite{Xiao1}, one has the following result:

\medskip

\noindent
{\bf{Theorem A}}~\cite{Jin} {\it Let $\dim_H$ be the Hausdorff dimension. Suppose that $\inf_{x\in E} h_f(x)=h>0$. Then
\begin{eqnarray*}
\dim_H G_f(E) &\le& \left(\frac{\dim_H E}{h} \wedge \left(\dim_H E+1-h\right)\right) \vee \dim_H E, \\
\dim_H R_f(E) &\le& \frac{\dim_H E}{h}\wedge 1.
\end{eqnarray*}
}

When $f$ is monofractal, like fractional Brownian motion or Weierstrass function, the exponent $h_f$ is a constant function independent of the set $E$. But for most of non smooth functions (see~\cite{BuZo,JAFFJMPA,FrJa} for instance), the behavior of $h_f$ is actually very irregular: it varies wildly from one point to another. To describe such a behavior, physicists~\cite{hentschel,FrischParisi,HaJeKaPrSh} introduced the so-called multifractal analysis, which consists in computing the Hausdorff dimension of the iso-H\"older sets:
\[
E_f(h)=\big\{x\in [0,1]: h_f(x)=h\big\}, \ h\ge 0.
\]
The function
\[
d_f:h\ge 0 \mapsto \dim_H E_f(h)
\]
is called the singularity spectrum of $f$, and $f$ is said to be multifractal if $E_f(h)\neq\emptyset$ for at least two distinct values of $h$.

The singularity spectrum of a function describes the distribution of its H\"older exponents from the macroscopic point of view: it tells how large is the set of points at which the function has a given H\"older exponent. This spectrum has been computed for certain classes of functions, as well as for some classical functions,  including  Riemann's nowhere differentiable function, L\'evy processes, L\'evy processes in multifractal time, self-similar functions, wavelet series or generic functions in certain Besov or Sobolev spaces, as well as indefinite integrals of positive measures \cite{Rand89,BMP,HoWa,JaRMI,Pesin,Ja97,Jafsiam1,JAFFJMP,JaPTRF,BuZo,JAFFJMPA,JA2002,Jafw,BSw,BSlevy,FrJa,Se,BJ}.

Inspired by the important role that H\"older exponents played in both multifractal analysis and dimension problems on the graph and range, it is interesting to find the following singularity spectra:
\[
d^{\, S}_f: h\ge 0 \mapsto \dim_H S_f(h),\ S\in\{G,R\},
\]
where for $h\ge 0$ we note
\[
G_f(h)=G_f(E_f(h)) \text{ and } R_f(h)=R_f(E_f(h)).
\]

In~\cite{Jin} we studied these singularity spectra for a class of random multifractal functions, namely the $b$-adic independent cascade function, which can be viewed as a generalization of the Mandelbrot cascades introduced in \cite{Mand}. In this paper this study is done for another class of random multifractal functions constructed in~\cite{BSw}: the random wavelet series built from Gibbs measures.  Before going to the details, let us first give some backgrounds and notations on the wavelet series and multifractal analysis.

\subsection{Orthogonal wavelet basis and multifractal analysis}

Let $\psi$ be an $r_0$-smooth mother wavelet on $\mathbb{R}$, with $r_0\in \mathbb{N}^*$, so that the functions $\{\psi_{j,k}=\psi(2^{j}\cdot-k)\}_{(j,k)\in\mathbb{Z}^2}$ form an orthogonal wavelet basis of $L^2(\mathbb{R})$ (see \cite{Me} for instance for the definition and construction). Each function $f\in L^2(\mathbb{R})$ can be written as
\[
f(x)=\sum_{(j,k)\in\mathbb{Z}^2} d_{j,k}\cdot \psi_{j,k}(x),\ \text{where }
\]
where the wavelet coefficient $d_{j,k}$ is given by
\[
d_{j,k}=2^j\int_{\mathbb{R}} f(t) \cdot \psi_{j,k}(t)\, \mathrm{d} t.
\]

It is known that the asymptotic behavior of the wavelet coefficients provides fine informations on the H\"older regularity of the function. For example, due to Proposition 4 in~\cite{Jafw}, if there exist constants $C_0>0$, $\epsilon_0\in (0,1)$ such that $|d_{j,k}|\le C_02^{-\epsilon_0j}$ for each $j\ge 0$ and $k\in \mathbb{Z}$, then $f$ is $\epsilon_0$-H\"older continuous, that is there exist $C>0$, $\delta>0$ such that for any $x,y\in\mathbb{R}$ with $|x-y|\le \delta$, we have $|f(x)-f(y)|\le C|x-y|^{\epsilon_0}$. Moreover, once $f$ is $\epsilon_0$-H\"older continuous, one can also obtain the pointwise regularity of $f$ from its wavelet coefficients: for each $(j,k)\in\mathbb{Z}^2$ define the wavelet leader
\[
L_{j,k}=\sup \left\{ |d_{j',k'}| : (j',k')\in\mathbb{Z}^2,\ [k'2^{-j'},(k'+1)2^{-j'})\subset [k2^{-j},(k+1)2^{-j})\right\}
\]
and for $x_0\in\mathbb{R}$ and $j\ge 0$ define the coefficient
\[
L_j(x_0)=\sup \left\{ L_{j,k} : k\in \mathbb{Z},\ x_0\in [(k-1)2^{-j},(k+2)2^{-j}) \right\}
\]
and the exponent 
\begin{equation}\label{HE}
\bar{h}_f(x_0)=\liminf_{j\to +\infty} -j^{-1} \log_2 L_j(x_0).
\end{equation}
Then, due to Corollary 1 in~\cite{Jafw},  for any $x_0\in\mathbb{R}$, if $[\bar{h}_f(x_0)]\le r_0$ we have   
\[
\bar{h}_f(x_0)=\sup \big\{h>0:\exists P\in \mathbb{C}[x], |f(x)-P(x-x_0)|=O(|x-x_0|^h), x\to x_0\big\},
\]
that is $\bar{h}_f(x_0)$ provides another very natural pointwise exponent for $f$ at $x_0$, whose connection with $h_f(x_0)$ is explained in the following remark.
\begin{remark}
{\rm By definition we have $h_f(x_0)\le \bar{h}_f(x_0)$, and $h_f(x_0)=\bar{h}_f(x_0)$ if neither of them is an integer. The difference between these two exponents is that $\bar{h}_f(x_0)$ is not influenced by addition of a polynomial function, whereas $h_f(x_0)$ describes directly the oscillation of function $f$ around $x_0$, and his sensible to the addition of a polynomial function.
}\end{remark}

Wavelet expansion is thus an effective tool to study the local regularity of a function. It is also connected to the Hausdorff spectrum as follows. Define the scaling function of $f$ as
\begin{equation}\label{xi}
\xi_{f}:q\in \mathbb{R}\mapsto \xi_{f}(q)=\liminf_{j\to+\infty}-j^{-1}\log_2\sum_{k\in\mathbb{Z}: [k2^{-j},(k+1)2^{-j}]\subset [0,1], L_{j,k}\neq 0} |L_{j,k}|^q.
\end{equation}
Then, if $r_0$ is large enough so that $\bar{h}_f(x)\le r_0$ for all $x\in [0,1]$ ,we have
\begin{equation}\label{xiup}
\dim_H \{x\in (0,1): \bar{h}_f(x)=h\} \le \xi_f^*(h):=\inf_{q\in\mathbb{R}} q\cdot h-\xi_f(q),\ h\ge 0,
\end{equation}
where a negative dimension means that the set is empty~\cite{Jabb,Jafw}. One says that the restriction of $f$ to $[0,1]$ fulfills the multifractal formalism at $h\ge 0$ if the above inequality is an equality.

\subsection{Random wavelet series built from multifractal measure}\label{1.3}

In~\cite{BSw}, Barral and Seuret construct a class of wavelet series by directly taking the wavelet coefficients built from some well-known multifractal measures,  in such a way that the Hausdorff spectrum of the wavelet series can be directly deduced from that of the measure. 

Specifically, let $\mu$ be a positive Borel measure on $\mathbb{R}$ supported by the interval $[0,1]$, and define
\[
F_\mu(x)=\sum_{j=0}^{\infty} \sum_{k=0}^{2^{j}-1} d_{j,k}\cdot \psi_{j,k}(x), \text{ where }d_{j,k} =\pm 2^{-j(s_0-1/p_0)} \mu([k2^{-j},(k+1)2^{-j}))^{1/p_0},
\]
with $s_0,p_0> 0$ and $s_0-1/p_0> 0$.

Denote by $I_{j,k}=[k2^{-j},(k+1)2^{-j})$. Notice that in this setting, the wavelet leader $L_{j,k}$ is nothing but $|d_{j,k}|=2^{-j(s_0-1/p_0)} \mu(I_{j,k})^{1/p_0}$, thus due to~\eqref{xi}, we have
\begin{equation}\label{xisp}
\xi_{F_\mu}(q)=q(s_0-1/p_0)+\tau_\mu(q/p_0), \ q\in\mathbb{R},
\end{equation}
where the so called R\'enyi entropy or $L^q$ spectrum of $\mu$ is 
\begin{equation}\label{tau}
\tau_{\mu}(q)=\liminf_{n\to\infty}-j^{-1}\log_2 \sum_{k=0,\cdots,2^{j-1}} \mathbf{1}_{\left\{\mu(I_{j,k})\neq 0\right\}}\cdot \mu(I_{j,k})^q,\ q\in\mathbb{R}.
\end{equation}
By construction, $F_\mu$  is $(s_0-1/p_0)$-H\"older continuous and belongs to the Besov space $B^{s_0,\infty}_{p_0}(\mathbb{R})$ if $s_0<r_0$. Moreover, if $\mu$ fulfills the multifractal formalism for measures at $\alpha\ge 0$ (in the sense of ~\cite{BMP}), then the restriction of $F_\mu$ to $[0,1]$ fulfills the multifractal formalism described above at $h=s_0-1/p_0+\alpha/p_0$ when $[h]\le r_0$ (see~\cite{BSw}).

From now on, $F_\mu$ stands for the restriction of $F_\mu$ to $[0,1]$.

In~\cite{BSw}, Barral and Seuret also considered some random multiplicative perturbation of $F_\mu$. It consists in considering a sequence of independent random variables $\{\pi_{j,k}\}_{j\ge 0,\ k\in\{0,1,\cdots,2^{j}-1\}}$ and then the wavelet series $F_\mu^{\text{pert}}$ on $[0,1]$ whose coefficients  are given by $d^{\text{pert}}_{j,k}= \pi_{j,k}\cdot d_{j,k}$:
\[
F^{\text{pert}}_\mu(x)=\sum_{j=0}^{\infty} \sum_{k=0}^{2^{j}-1} \pi_{j,k}\cdot (\pm 2^{-j(s_0-1/p_0)} \mu(I_{j,k})^{1/p_0} )\cdot \psi_{j,k}(x).
\]
 Under certain conditions on the moments of $\pi_{j,k}$, for example,
\begin{itemize}
\item[(A1)] For any $q\in\mathbb{R}$ we have $\sup_{j\ge 0} \sup_{k=0,1,\cdots,2^{j}-1} \mathbb{E}(|\pi_{j,k}|^q)<\infty$.
\end{itemize}
they show that, with probability 1, $F_\mu^{\text{pert}}$ fulfills the multifractal formalism at $h$ whenever $F_\mu$ does.

\subsection{Main result}\label{1.4}

This paper studies the graph and range singularity spectra for random wavelet series $F_\mu^{\text{pert}}$, where $\mu$ is the canonical image on $[0,1]$ of a Gibbs measure $\mu_\varphi$ associated with a H\"older potential $\varphi$ on a symbolic space $\Sigma$ (see Section~\ref{SFTGibbs} and \ref{randomspec} for precise definitions). 

The random perturbation of $F_\mu$ is essential to our approach based on the potential theoretic method for the estimation of Hausdorff dimensions (see Chapter 4 in~\cite{Falc}). The efficiency of the combination between randomness and potential theoretic method has been used to compute the Hausdorff dimension of the whole graph of classical processes \cite{Falc}, random Weierstrass function \cite{Hunt} and random wavelet series \cite{Rou} (see also \cite{JAFFJMP,Rou1} for questions related to the dimensions of the whole graph of wavelet series). 

\medskip

In addition to (A1), we assume:
\begin{itemize}
\item[(A2)] The mother wavelet $\psi$ has only finite many zeros on $[0,1]$.
\item[(A3)] Each random variable $\pi_{j,k}$ has a bounded density function $f_{j,k}$ and for any $\epsilon>0$ we have $\sum_{j\ge 0} (\sup_{k=0,\cdots, 2^{j}-1}\|f_{j,k}\|_\infty) \cdot 2^{-j\epsilon}<\infty$.
\end{itemize}
Under these assumptions we prove the following result:
\begin{theorem}\label{mainthm}
With probability 1 for all $h\in(0,1)$ such that $\xi_{F_\mu}^*(h)>0$,
\begin{eqnarray*}
d^{\, G}_{F_\mu^{\text{pert}}}(h) &=& \frac{d_{F_\mu^{\text{pert}}}(h)}{h} \wedge \Big(d_{F_\mu^{\text{pert}}}(h)+1-h\Big)=\frac{\xi_{F_\mu}^*(h)}{h} \wedge \Big(\xi_{F_\mu}^*(h)+1-h\Big), \\
d^{\, R}_{F_\mu^{\text{pert}}}(h) &=& \frac{d_{F_\mu^{\text{pert}}}(h)}{h} \wedge 1= \frac{\xi_{F_\mu}^*(h)}{h} \wedge 1.
\end{eqnarray*}
\end{theorem}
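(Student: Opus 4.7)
The upper bounds in Theorem~\ref{mainthm} follow immediately from Theorem~A: since under (A1) the perturbed series $F_\mu^{\text{pert}}$ inherits the multifractal formalism of $F_\mu$ (Barral--Seuret~\cite{BSw}), one has $\dim_H E_{F_\mu^{\text{pert}}}(h)=\xi_{F_\mu}^*(h)$, and applying Theorem~A with $E=E_{F_\mu^{\text{pert}}}(h)$ and $\inf_{x\in E}h_{F_\mu^{\text{pert}}}(x)=h$ gives exactly the stated upper bounds on $d^{\,G}_{F_\mu^{\text{pert}}}(h)$ and $d^{\,R}_{F_\mu^{\text{pert}}}(h)$; note that for $h\in(0,1)$ the spurious $\vee\dim_H E$ term in Theorem~A is automatically dominated.

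The content of the theorem is the matching lower bounds, which I would prove via the potential-theoretic (Frostman energy) method. For each $h$ with $\xi_{F_\mu}^*(h)>0$, fix a parameter $q$ realizing the Legendre transform and consider the Gibbs measure $\mu_{\varphi_q}$ on $\Sigma$ whose canonical projection $\nu$ on $[0,1]$ is carried by the iso-H\"older set $E_{F_\mu^{\text{pert}}}(h)$ and has Hausdorff dimension $\xi_{F_\mu}^*(h)$. Push $\nu$ forward under $\mathrm{Id}\times F_\mu^{\text{pert}}$ and $F_\mu^{\text{pert}}$ to obtain candidate measures on $G_{F_\mu^{\text{pert}}}(h)$ and $R_{F_\mu^{\text{pert}}}(h)$. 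Finiteness in expectation of the $s$-energies
\[
\iint\frac{d\nu(x)\,d\nu(y)}{(|x-y|^2+|F_\mu^{\text{pert}}(x)-F_\mu^{\text{pert}}(y)|^2)^{s/2}}\quad\text{and}\quad\iint\frac{d\nu(x)\,d\nu(y)}{|F_\mu^{\text{pert}}(x)-F_\mu^{\text{pert}}(y)|^s}
\]
reduces, by Fubini, to a uniform bound $\mathbb{E}\bigl(|F_\mu^{\text{pert}}(x)-F_\mu^{\text{pert}}(y)|^{-s}\bigr)\lesssim|x-y|^{-sh}$ on $\mathrm{supp}\,\nu\times\mathrm{supp}\,\nu$.

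Establishing this bound is the heart of the proof, and is where (A2) and (A3) enter. Assumption (A3) ensures that $F_\mu^{\text{pert}}(x)-F_\mu^{\text{pert}}(y)$, as a series of independent absolutely continuous variables, has a density whose sup-norm is controlled through the summability $\sum_{j}\sup_k\|f_{j,k}\|_\infty\cdot 2^{-j\epsilon}<\infty$; combined with an upper H\"older estimate $|F_\mu^{\text{pert}}(x)-F_\mu^{\text{pert}}(y)|\lesssim|x-y|^{h-\epsilon}$ this yields the required negative-moment control. The obstruction is that near a zero $x_0$ of $\psi$ the local oscillation of $F_\mu^{\text{pert}}$ is not detected by nearby wavelet leaders, so the bound would otherwise degenerate. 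By (A2) there are only finitely many such zeros, which makes the strategy announced in the abstract feasible: construct a sequence $\Sigma^{(n)}\subset\Sigma$ of topologically transitive subshifts of finite type whose canonical projections have Hausdorff distance to the zero set of $\psi$ tending to $0$, carry Gibbs measures whose iso-H\"older dimensions converge to $\xi_{F_\mu}^*(h)$, and on which the negative-moment estimate holds uniformly.

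Combining the energy bound on $\Sigma^{(n)}$ with Frostman's lemma and passing to the limit $n\to\infty$ then produces, for every $s$ strictly below $\xi_{F_\mu}^*(h)/h\wedge(\xi_{F_\mu}^*(h)+1-h)$ (respectively $\xi_{F_\mu}^*(h)/h\wedge 1$), an almost-sure positive-mass measure of finite $s$-energy on the graph (respectively range), giving the lower bounds. The main obstacle is producing the uniform negative-moment estimate with the sharp exponent $sh$ simultaneously with the subshift approximation of the zero set of $\psi$: this requires a delicate interplay between conditional density estimates from (A3) on finite partial sums of the wavelet series and the thermodynamic formalism for the approximating Gibbs measures $\mu_{\varphi_q}^{(n)}$ on $\Sigma^{(n)}$.
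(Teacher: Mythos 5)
Your plan follows essentially the same route as the paper: upper bounds from Theorem~A combined with the validity of the multifractal formalism for $F_\mu^{\text{pert}}$, and lower bounds by the potential-theoretic method applied to projections of Gibbs measures carried by a sequence of topologically transitive subshifts of finite type avoiding the zeros of $\psi$, with the negative-moment estimate obtained by conditioning on all but one perturbation variable and using (A3) together with the lower bound $|\psi|\ge c_{\psi,k}$ available on the subshift. One ingredient you invoke implicitly when ``passing to the limit $n\to\infty$'' but do not justify is that the exponents and dimensions attached to the approximating Gibbs measures actually converge to the full-shift values $h_q$ and $\tau_\mu^*(\tau_\mu'(q))$; this rests on the convergence of the pressures $P_{X_k}(q\varphi)\to P_\Sigma(q\varphi)$ (the lemma at the end of Section~\ref{prfmainthm}), which in turn requires showing that the transitive components $X_k$ converge to $\Sigma$ in Hausdorff distance. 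This should be flagged as a step to be proved, not assumed.

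The one genuine gap is uniformity in $h$. The theorem asserts the lower bounds almost surely \emph{simultaneously} for all $h\in(0,1)$ with $\xi_{F_\mu}^*(h)>0$, whereas your plan --- fix a $q$ realizing the Legendre transform, bound the expected energy, conclude a.s.\ finiteness --- only yields ``for each $h$, almost surely'', with an exceptional null set depending on $q$; since $q$ ranges over an uncountable set this does not give the stated result (the paper makes exactly this point when explaining why Roueff's method falls short). Closing the gap requires bounding $\mathbb{E}\bigl(\sup_{q\in K}\mathcal{I}^{S}_{n,\delta}(q,\epsilon)\bigr)$ over compact parameter intervals $K$, which the paper achieves by writing each energy as $\sum_p\liminf_m A_{p,m}$, dominating $\sup_q\liminf_m A_{p,m}$ by $\sup_q A_{p,m_p}+\sum_{m\ge m_p}\sup_q|A_{p,m+1}-A_{p,m}|$, estimating the increment terms $\Delta B_{p,m}$ separately, and covering $K$ by finitely many neighborhoods $U_\epsilon(\bar q)$ on which the kernels are comparable; none of this appears in your plan. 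A second, more minor imprecision: for the graph energy with exponent $s>1$ the relevant conditional estimate is not $\mathbb{E}\bigl(|F_\mu^{\text{pert}}(x)-F_\mu^{\text{pert}}(y)|^{-s}\bigr)\lesssim|x-y|^{-sh}$ but rather $\mathbb{E}\bigl((|x-y|^2+|F_\mu^{\text{pert}}(x)-F_\mu^{\text{pert}}(y)|^2)^{-s/2}\bigr)\lesssim|x-y|^{1-s-h-\epsilon}$, the factor $|x-y|^{1-s}$ coming from integrating the kernel in the vertical variable against the bounded conditional density and the factor $|x-y|^{-h}$ from the size of the deterministic coefficient $A$; it is this estimate that produces the $\xi_{F_\mu}^*(h)+1-h$ branch, which your stated reduction would never reach.
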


\begin{remark}{\rm 
(1) Since we are dealing with sets on the graph and range, it is more convenient to use the oscillating exponent $h_{F_\mu^{\text{pert}}}$. But while transferring the local dimension of Gibbs measure $\mu$ to the H\"older exponent of wavelet series $F_\mu^{\text{pert}}$, we have to use $\bar{h}_{F_\mu^{\text{pert}}}$. So to avoid complications,  we only consider the iso-H\"older set $E_{F_\mu^{\text{pert}}}(h)$ for $h\in (0,1)$, since in this case $h_{F_\mu^{\text{pert}}}$ and $\bar{h}_{F_\mu^{\text{pert}}}$ are equal everywhere on the set $E_{F_\mu^{\text{pert}}}(h)$. For $h\ge 1$, it is clear that $\dim_H G_{F_\mu^{\text{pert}}}(h)=\dim_H E_{F_\mu^{\text{pert}}}(h)$, which provides us with the whole graph spectrum. But we have no result for the range singularity spectrum for $h\ge 1$.

(2) Notice that our result is uniform. It is valid almost surely for all $h\in (0,1)$ with $\xi_{F_\mu}^*(h)>0$, and not just for each $h\in (0,1)$ almost surely.

}\end{remark}

Let us roughly explain our strategy to prove Theorem~\ref{mainthm}. We apply the potential theoretic method to families of images of Gibbs measures on the graph and range of $F_\mu^{\text{pert}}$. We must consider the restrictions of the potentials $(q\varphi)_{q\in\mathbb{R}}$ on a sequence $\{X_{k}\}_{k\ge 1}$ of subshifts of finite type of $\Sigma$ whose canonical projection $\widetilde X_k$ in $[0,1]$ has a positive Hausdorff distance to the set of zeros of $\psi$, which tends to $0$ as $k$ tends to $\infty$. We also need to consider  the canonical  projections on $[0,1]$ of the equilibrium states of these restricted potentials, that we denote by $\{(\mu_q^{(k)})_{q\in\mathbb{R}}\}_{k\ge 1}$. Then for each $k\ge 1$, there exists an interval $J_k$ such that for each $q\in J_k$,  there exists an exponent $h^{(k)}_q\in (0,1)$  as well as $E_q^{(k)}\subset E_{F_\mu^{\text{pert}}}(h^{(k)}_q)\cap \widetilde X_k$ such that $\mu_q^{(k)}(E_q^{(k)})>0$, and two numbers $\gamma_{q,G}^{(k)},\gamma_{q,R}^{(k)}>0$ such that for any $\delta>0$ small enough, almost surely, for all $q\in J_k$
\[
\iint_{s,t\in E_q^{(k)}} (|F_\mu^{\text{pert}}(s)-F_\mu^{\text{pert}}(t)|^2+|s-t|^2)^{-(\gamma_{q,G}^{(k)}-\delta)/2} \mathrm{d} \mu_q^{(k)}(s)\mathrm{d} \mu_q^{(k)}(t) <\infty
\]
and
\[
\iint_{s,t\in E_q^{(k)}} |F_\mu^{\text{pert}}(s)-F_\mu^{\text{pert}}(t)|^{-\gamma_{q,R}^{(k)}+\delta} \mathrm{d} \mu_q^{(k)}(s)\mathrm{d} \mu_q^{(k)}(t) <\infty.
\]
This yields the almost sharp lower bounds
\[d^{\, G}_{F_\mu^{\text{pert}}}(h^{(k)}_q)\ge \gamma_{q,G}^{(k)}-\delta \text{ and } d^{\, R}_{F_\mu^{\text{pert}}}(h^{(k)}_q)\ge \gamma_{q,R}^{(k)}-\delta,\]
and by letting $k$ tend to $\infty$ we can get the sharp lower bound in Theorem~\ref{mainthm} (See Section~\ref{Lbe} for details).

The reason why we must consider subshift of finite type that avoids zeros of $\psi$ is that for  proving the finiteness of the above integrals, we have to control from below the increment $|\psi(s)-\psi(t)|$ when $s\in E^{(k)}_q$ and $t$ is far away from $s$. This is possible only if $s$ is never too close to the zeros of $\psi$. 

\smallskip

Here we must mention the results of Roueff in~\cite{Rou} which deals with the Hausdorff dimension of whole graph of random wavelet series. Briefly speaking, let $\{c_{j,k}\}_{j\ge 0, k=0,\cdots,2^j-1}$ be a sequence of real valued random variables whose laws are absolutely continuous with respect to Lebesgue measure. Let $\mathcal{T}(c_{j,k})$ stand for the $L^\infty$ norm of the density of $c_{j,k}$. Roueff proves that (Theorem 1 in~\cite{Rou}) if $\psi$ has finitely many zeros on $[0,1]$, then the Hausdorff dimension of the graph of the random wavelet series
\[
F(x)=\sum_{j\ge 0}\sum_{k=0}^{2^j-1} c_{j,k}\cdot \psi_{j,k}(x)
\]
is almost surely larger than or equal to
\[
\limsup_{J\to\infty}\liminf_{j\to\infty}\frac{\log \min_{i=j}^{j+J}\left\{\sum_{k=0}^{2^i-1}\min\{1,\mathcal{T}(c_{i,k})\cdot 2^{-i}\}\cdot \nu(I_{i,k})^2\right\}}{-j\log2},
\]
where $\nu$ can be chosen as any probability measure on $[0,1]$ such that there exists a constant $C$ and $s>0$ such that for any Borel sets $A\subset [0,1]$ and $B\subset A$ such that $\nu (A)>0$ we have $\nu(B)/\nu(A)\le C (|B|/|A|)^s$, where $|B|$, $|A|$ stand for the diameters of $A$ and $B$. Due to the scaling properties of the equilibrium state $\mu_{q\varphi}$ of each potential $q\varphi$, $q\in\mathbb{R}$, it is natural to try using Roueff's approach to our problem: for $j\ge 0$ and $k=0,\cdots,2^j-1$ we take
\[
c_{j,k}=\pi_{j,k}\cdot d_{j,k}, \text{ where } d_{j,k} =\pm 2^{-j(s_0-1/p_0)} \mu(I_{j,k})^{1/p_0},
\]
this, together with (A3) and the definition of $\mathcal{T}(c_{j,k})$, gives us
\[
\mathcal{T}(c_{j,k})=\|f_{j,k}\|_{\infty}\cdot |d_{j,k}|^{-1}
\]
Then, for $q\in\mathbb{R}$ and $\epsilon>0$ define 
\[
E_n(q,\epsilon)=\left\{x\in[0,1]: \forall\ j\ge n, \ \left\{\begin{array}{l}
|d_{j,k_{j,x}}|\in[2^{-j(h_q+\epsilon)},2^{-j(h_q-\epsilon)}],\\
\mu_{q}(I_{j,k_{j,x}})\in [2^{-j(\xi_{F_\mu}^*(h_q)+\epsilon)}, 2^{-j(\xi_{F_\mu}^*(h_q)-\epsilon)}]
\end{array} \right. \right\},
\]
where $\mu_q$ is the canonical projection of $\mu_{q\varphi}$ on $[0,1]$, $k_{j,x}$ is the unique integer $k$ such that $x\in I_{j,k}=[k2^{-j},(k+1)2^{-j})$ and $h_q=s_0-1/p_0+\tau'_\mu(q)/p_0$. Due to Section~\ref{magibbs} and~\ref{randomspec} we have for any $\epsilon>0$, $\mu_{q\varphi}(\lim_{n\to\infty} E_n(q,\epsilon))=1$. By continuity we can find an integer $N_{q,\epsilon}$ such that $\mu_{q\varphi}(E_{N_{q,\epsilon}}(q,\epsilon))>0$. Now we take $\nu_q=\mu_{q\varphi}\big|_{E_{N_{q,\epsilon}}(q,\epsilon)}$ and then define $\nu$ by $\nu(B)=\nu (B\cap E_{N_{q,\epsilon}})$ for each Borel subset of $[0,1]$ (here $\nu(B)/\nu(A)\le C (|B|/|A|)^s$ holds with some constant $C$, $s=\xi_{F_\mu}^*(h_q)-\epsilon$, and $A$ and $B$ dyadic intervals). Then for $j>N_{q,\epsilon}$, we have
\begin{eqnarray*}
&&\sum_{k=0}^{2^j-1}\min\{1,\mathcal{T}(c_{j,k})\cdot 2^{-j}\}\cdot \nu(I_{j,k})^2\\
&\le& \sum_{k=0}^{2^j-1}\min\{1,\|f_{j,k}\|_\infty\cdot |d_{j,k}|^{-1}\cdot 2^{-j}\} \cdot \mu_{q\varphi}(I_{j,k} \cap E_{N_{q,\epsilon}}(q,\epsilon) )^2 \\
&\le& \sum_{k=0}^{2^j-1}\min\{1,\|f_{j,k}\|_\infty\cdot 2^{-j(1-h_q-\epsilon)}\} \cdot 2^{-2j(\xi_{F_\mu}^*(h_q)-\epsilon)}\cdot \mathbf{1}_{\left\{\mu_{q\varphi}(I_{j,k}) \ge 2^{-j(\xi_{F_\mu}^*(h_q)+\epsilon)}\right\}}\\
&\le&\sum_{k=0}^{2^j-1}\min\{1,\|f_{j,k}\|_\infty\cdot 2^{-j(1-h_q-\epsilon)}\} \cdot 2^{-2j(\xi_{F_\mu}^*(h_q)-\epsilon)}\cdot 2^{j(\xi_{F_\mu}^*(h_q)+\epsilon)}\cdot \mu_{q\varphi}(I_{j,k}) \\
&\le& \min\{2^{j(1-h_q-\epsilon)},\sup_{k=0,\cdots,2^{j}-1}\|f_{j,k}\|_\infty\}\cdot 2^{-j(\xi_{F_\mu}^*(h_q)+1-h_q-4\epsilon)} \sum_{k=0}^{2^{j}-1} \mu_{q\varphi}(I_{j,k}) \\
&\le&\big(\sup_{k=0,\cdots,2^{j}-1}\|f_{j,k}\|_\infty\big) \cdot 2^{-j(\xi_{F_\mu}^*(h_q)+1-h_q-4\epsilon)} 
\end{eqnarray*}
Then under assumptions (A1-3), due to the fact that $\mu_{q}$ is carried by the set $E_{F_\mu^{\text{pert}}}(h_q)$ almost surely, Roueff's result implies that, if $\xi_{F_\mu}^*(h_q)+1-h_q-4\epsilon>1$ (this is essentially in his proof), then almost surely
\[
d^{\, G}_{F_\mu^{\text{pert}}}(h_q)\ge \xi_{F_\mu}^*(h_q)+1-h_q-4\epsilon.
\]
By taking a sequence of $\epsilon$ tending to $0$, we get the sharp lower bound for $d^{\, G}_{F_\mu^{\text{pert}}}(h_q)$ given by Theorem~\ref{mainthm} when $d^{\, G}_{F_\mu^{\text{pert}}}(h_q)>1$. But this result holds only ``for each $q\in\mathbb{R}$ almost surely", so is not uniform like Theorem~\ref{mainthm}, and it seems that Roueff's method cannot yield such a result, nor the value of $d^{\, G}_{F_\mu^{\text{pert}}}(h_q)$ when $d^{\, G}_{F_\mu^{\text{pert}}}(h_q)\le 1$ and $h_q\le 1$. 

\medskip

The rest of the paper is organized as follows: Section~\ref{SFTGibbs} gives some definitions and notations about subshift of finite type, Gibbs measure and its multifractal analysis. Sections~\ref{prfmainthm} and ~\ref{prfkeythm} provide the proof of Theorem~\ref{mainthm}.

\section{Subshift of finite types, Gibbs measures and multifractal analysis}\label{SFTGibbs}

\subsection{Subshift of finite type} 
Let $\Sigma=\{0,1\}^\mathbb{N}$ and $\Sigma_*=\bigcup_{n\ge 0} \Sigma_n$, where $\Sigma_0=\{\varnothing\}$ and $\Sigma_n=\{0,1\}^n$ for $n\ge 1$.

Denote the length of $w$ by $|w|=n$ if $w\in\Sigma_n$, $n\ge 0$ and $|w|=\infty$ if $w\in\Sigma$.

For $w\in\Sigma_*$ and $t\in\Sigma_*\bigcup \Sigma$, the concatenation of $w$ and $t$ is denoted by $w\cdot t$ or $wt$.

For $w\in \Sigma_*$, the cylinder with root $w$, i.e. $\{w\cdot u: u \in \Sigma\}$ is denoted by $[w]$.

The set $\Sigma$ is endowed with the standard metric distance
\[
\rho(s,t)=\inf\{2^{-n}: n\ge 0,\  \exists \ w\in\Sigma_n \text{ such that } s,t\in [w]\}.
\]
Then $(\Sigma,\rho)$ is a compact metric space. Denote by $\mathcal{B}$ the Borel $\sigma$-algebra with respect to $\rho$. Clearly $\mathcal{B}$ can be generated by the cylinders $[w]$, $w\in\Sigma_*$.

If $n\ge 1$ and $w=w_1\cdots w_n\in \Sigma_n$ then for every $0\le i\le n$, we write $w|_i=w_1\dots w_i$, with the convention $w|_0=\varnothing$. Also, for any infinite word $t=t_1 t_2\cdots \in\Sigma$ and $i\ge 0$, we write $t|_i=t_1\dots t_i$, with the convention $t|_0=\varnothing$.

For $t\in\Sigma$ define the left side shift $\sigma: \Sigma\mapsto\Sigma$ by
\[\sigma(t_1t_2\cdots)=t_2t_3\cdots.\]

A subshift is a $\sigma$-invariant compact set $X\subset \Sigma$, that is $\sigma(X)\subset X$.

A subshift $X$ is said to be of finite type if there is an admissible set $A\subset \Sigma_n$ for some $n\ge 2$ such that
\[
X=\{t\in\Sigma: \sigma^m(t)|_n\in A, \ \forall\ m\ge 0 \}.
\]
The admissible set $A$ induces a transition matrix $B: \Sigma_{n-1}\times\Sigma_{n-1} \mapsto \{0,1\}$ with $B(a_1\cdots a_{n-1}, a_2\cdots a_n)=1$ if $a_1\cdots a_n \in A$, and $B(i,j)=0$ otherwise. Then $X$ can be redefined as
\[
X=\{t \in \Sigma: B(\sigma^m(t)|_{n-1}, \sigma^{m+1}(t)|_{n-1})=1,\ \forall\ m\ge 0\}.
\]

The dynamical system $(X,\sigma)$ is called topologically transitive (resp. mixing) if $B$ is irreducible, that is for any $i,j\in \Sigma_{n-1}$ there is a $k\ge 1$ such that $B^k(i,j)>0$ (resp. if $B$ is primitive, that is there is a $k\ge 1$ such that $B^k(i,j)>0$ for all $i,j\in\Sigma_{n-1}$).

\subsection{Gibbs measure on topologically transitive subshift of finite type}

Let $\varphi$ be a H\"older continuous function defined on $\Sigma$, which will be mentioned as a H\"older potential in the following.

Let $(X,\sigma)$ be a topologically transitive subshift of finite type of the full shift $(\Sigma,\sigma)$.

For $n\ge 1$ the $n^{\text{th}}$-order Birkhoff sum of $\varphi$ over $\sigma$ is the function
\[
S_n\varphi(t)=\sum_{i=0}^{n-1} \varphi\circ \sigma^i(t),\ t\in\Sigma.
\]

The topological pressure of $\varphi$ on $X$ is defined by
\begin{equation}\label{pressure}
P_X(\varphi)=\lim_{n\to \infty} \frac{1}{n}\  \log \sum_{w\in\Sigma_n: [w]\cap X\neq \emptyset} \exp\left(\max_{t\in [w]} S_n\varphi (t)\right)
\end{equation}
(the existence of the limit is ensured by sub-additivity properties of the logarithm on the right hand side).

It follows from the thermodynamic formalism developed by Sinai, Ruelle, Bowen and Walters \cite{Bo,Ru} that there exists a constant $C(\varphi)$ (independent of $X$), as well as a unique ergodic measure $\mu_\varphi$ on $(X,\sigma)$, namely the equilibrium state or Gibbs measure of $\varphi$ restricted to $X$, such that for any $t\in X$, $n\ge 0$ and $t'\in [t|_n]$,
\begin{equation}\label{gibbs}
C(\varphi)^{-1}\le \frac{\mu_\varphi([t|_n])}{\exp(S_n\varphi(t')-nP_X(\varphi))} \le C(\varphi),
\end{equation}
and $\mu_\varphi$ possesses the  quasi-Bernoulli property,  
\begin{equation}\label{quasib}
C(\varphi)^{-1}\mu_\varphi([w])\mu_\varphi([u])\le \mu_\varphi([wu]) \le C(\varphi) \mu_\varphi([w])\mu_\varphi([u]), \quad \forall w,u\in\Sigma_*, \ [wu]\cap X\neq\emptyset.
\end{equation}

\subsection{Multifractal analysis of Gibbs measure}\label{magibbs}

Here we follow \cite{Rand89,BBP}. Consider a topologically transitive subshift $X$ of finite type and a H\"older potential $\varphi$ on $X$. Denote by $\mu_\varphi$ the equilibrium state on $(X,\sigma)$ with potential $\varphi$.

Define the R\'enyi entropy or $L^q$ spectrum of $\mu_\varphi$ as
\begin{equation}\label{tau1}
\tau_{\mu_\varphi}(q)=\liminf_{n\to\infty}-\frac{1}{n}\log_2 \sum_{w\in \Sigma_n, \mu_\varphi([w])\neq 0} \mu_\varphi([w])^q,\ q\in\mathbb{R}.
\end{equation}
It is easy to deduce from~\eqref{pressure} and~\eqref{gibbs} that the above limit inferior is in fact a limit, and it is equal to
\begin{equation}\label{tau2}
\tau_{\mu_\varphi}(q)=\frac{1}{\log 2} (qP_X(\varphi)-P_X(q\varphi)).
\end{equation}
Due to Corollary 5.27 in~\cite{Ru}, if $(X,\sigma)$ is topologically transitive, then $q\mapsto P_X(q\varphi)$ is a convex analytic function on $\mathbb{R}$, thus $\tau_{\mu_\varphi}$ is a concave analytic function on $\mathbb{R}$ and
\begin{equation}\label{tau3}
\tau_{\mu_\varphi}'(q)=\frac{1}{\log 2} (P_X(\varphi)-\frac{\mathrm{d}}{\mathrm{d} q}P_X(q\varphi)).
\end{equation}

Denote by $\tau_{\mu_\varphi}^*: \alpha\in\mathbb{R}\mapsto \inf_{q\in\mathbb{R}} q\alpha-\tau_{\mu_\varphi}(q)$ the Legendre transform of $\tau_{\mu_\varphi}$. Since $\tau_{\mu_\varphi}$ is concave and analytic over $\mathbb{R}$, we have for any $q\in \mathbb{R}$,
\begin{equation}\label{tau4}
\tau^*_{\mu_\varphi}(\tau_{\mu_\varphi}'(q))=q\tau_{\mu_\varphi}'(q)-\tau_{\mu_\varphi}(q)=-q\frac{\mathrm{d}}{\mathrm{d} q}P_X(q\varphi)+P_X(q\varphi).
\end{equation}

For $\alpha\in \{\tau_{\mu_\varphi}'(q): q\in\mathbb{R}\}$ define the set
\[
E_{\mu_\varphi}(\alpha)=\Big \{t\in X: \lim_{n\to\infty} \frac{\log \mu_\varphi([t|_n])}{\log 2^{-n}}=\alpha\Big \}
\]
and
\[
\widetilde{E}_{\mu_\varphi}(\alpha)=\Big \{t\in X: \lim_{n\to\infty} \frac{\log \mu_\varphi([t|_n])}{\log 2^{-n}}=\lim_{n\to\infty} \frac{\log \max_{w\in\mathcal{N}(t|_n)}\mu_\varphi([w])}{\log 2^{-n}}=\alpha\Big \},
\]
where for any $w\in\Sigma_*$ define the set of neighbor words of $w$ by
\begin{equation}\label{neighbor}
\mathcal{N}(w)=\Big\{u\in\Sigma_{|w|}: \big|\sum_{i=1}^{|w|} (u_i-w_i) \cdot 2^{-i}\big| \le 2^{-|w|}\Big\}.
\end{equation}

By using~\eqref{tau1}, it is standard to check that
\begin{equation*}
\dim_H \widetilde{E}_{\mu_\varphi}(\alpha) \le \dim_H E_{\mu_\varphi}(\alpha) \le \tau^*_{\mu_\varphi}(\alpha).
\end{equation*}
Moreover, one can prove that this is actually an equality: For $q\in\mathbb{R}$ denote by $\mu_{q\varphi}$ the equilibrium state of $q\varphi$ restricted to $X$. Then applying~\eqref{gibbs} to $\mu_\varphi$ and $\mu_{q\varphi}$, together with~\eqref{tau2} we can easily get for any $t\in X$ and $n\ge 1$,
\[
(C(\varphi)^{|q|}C(q\varphi))^{-1}\mu_\varphi([t|_n])^q e^{-n\tau_{\varphi}(q)} \le \mu_{q\varphi}([t|_n]) \le C(\varphi)^{|q|}C(q\varphi)\mu_\varphi([t|_n])^q e^{-n\tau_{\varphi}(q)}.
\]
Then due to~\cite{BBP} and the fact that $\mu_\varphi$ is quasi-Bernoulli, we get for $\mu_{q\varphi}$-almost every $t\in X$,
\[
\lim_{n\to\infty} \frac{\log \mu_{\varphi}([t|_n])}{\log 2^{-n}}=\lim_{n\to\infty} \frac{\log \max_{w\in\mathcal{N}(t|_n)}\mu_\varphi([w])}{\log 2^{-n}}=\tau_{\mu_\varphi}'(q)\]
and
\[\lim_{n\to\infty} \frac{\log \mu_{q\varphi}([t|_n])}{\log 2^{-n}}=\tau^*_{\mu_\varphi}(\tau_{\mu_\varphi}'(q)).
\]
Due to the mass distribution principle, this implies that for any $q\in \mathbb{R}$,
\begin{equation}\label{mulfor}
\dim_H \widetilde{E}_{\mu_\varphi}(\tau_{\mu_\varphi}'(q))\ge \dim_H (\mu_{q\varphi}) \ge \tau^*_{\mu_\varphi}(\tau_{\mu_\varphi}'(q)),
\end{equation}
where for any positive Borel measure $\mu$ defined on a compact metric space, the lower Hausdorff dimension of $\mu$ is given by $\dim_H (\mu) =\inf \{ \dim_H E: \mu(E)>0\}$.

\section{Proof of Theorem~\ref{mainthm}}\label{prfmainthm}

From now on we fix a H\"older potential $\varphi$ on the $\Sigma$ and denote by $\mu$ the Gibbs measure on $(\Sigma,\sigma)$ with potential $\varphi$. We avoid the trivial case that $\varphi$ is a constant function.

\subsection{The multifractal nature of $F_\mu$ and $F_\mu^{\text{pert}}$. }\label{randomspec}

Denote the canonical mapping 
\[
\lambda: w\in\Sigma_*\bigcup\Sigma\mapsto \lambda(w)=\sum_{i=1}^{|w|} w_i \cdot 2^{-i}\in [0,1].
\]

For $w\in\Sigma_*$ let
\[
T_w:x\in\mathbb{R}\mapsto 2^{-|w|}\cdot x+\lambda(w) \text{ and }\psi_w=\psi\circ T_w^{-1}.
\]

Consider the wavelet series
\begin{equation}\label{Fpert}
F_\mu(x)=\sum_{w\in\Sigma_*} d_w\cdot \psi_w(x), \text{ with }|d_w|=2^{-|w|(s_0-1/p_0)}\mu([w])^{1/p_0}.
\end{equation}
Up to the formal replacement of dyadic intervals by the cylinders of $\Sigma$, this is the wavelet series built from the image of $\mu$ by $\lambda$ in Section~\ref{1.3}.  
Recall (see \eqref{xisp}) that
\begin{equation*}
\xi_{F_\mu}(q)=q(s_0-1/p_0)+\tau_\mu(q/p_0), \ q\in\mathbb{R}.
\end{equation*}
So for $\alpha\ge 0$ and $h=s_0-1/p_0+\alpha/p_0$ such that $h\le r_0$, we have
\begin{equation}\label{xiup2}
\dim_H \{x\in (0,1): \bar{h}_{F_\mu}(x)=h\} \le \xi_{F_\mu}^*(h) = \tau_\mu^*(\alpha).
\end{equation}

For $q\in \mathbb{R}$ denote by $\mu_q$ the equilibrium state of the potential $q\varphi$ on $(\Sigma,\sigma)$.. Applying the results in Section~\ref{magibbs} we have for any $q\in\mathbb{R}$, for $\mu_q$-almost every $t\in\Sigma$,
\[
\lim_{n\to\infty}\frac{\log \mu([t|_n])}{\log 2^{-n}}=\lim_{n\to\infty} \frac{\log \max_{w\in\mathcal{N}(t|_n)}\mu([w])}{\log 2^{-n}}=\tau'_\mu(q).
\]
Together with Theorem 1 of~\cite{BSw}, this implies that $\mu_q$ is carried by $\{x\in (0,1): \bar{h}_{F_\mu}(x)=s_0-1/p_0+\tau_\mu'(q)/p_0\}$. Then due to~\eqref{xiup2} and~\eqref{mulfor}, we get that $F_\mu$ obeys the multifractal formalism at each $h=s_0-1/p_0+\alpha/p_0$ such that $h\le r_0$ and $\alpha\in\{\tau_\mu'(q):q\in\mathbb{R}\}$:
\[
\dim_H \{x\in [0,1]: \bar{h}_{F_\mu}(x)=h\}=\xi_{F_\mu}^*(h)=\tau^*_\mu(\alpha).
\]

\medskip

The random perturbation $F_\mu^{\text{pert}}$ is obtained from $F_\mu$ and a sequence of independent random variables $\{\pi_w\}_{w\in \Sigma_*}$ as 
\[
F_\mu^{\text{pert}}(x)=\sum_{w\in\Sigma_*} \pi_w\cdot d_w\cdot \psi_w(x),
\]
and our assumption (A1) is: For any $q\in\mathbb{R}$ we have $\sup_{w\in\Sigma_*} \mathbb{E}(|\pi_w|^q)<\infty$. We have seen in Section~\ref{1.3} that this implies that
\begin{equation}\label{perturb}
\xi_{F_\mu^{\text{pert}}}=\xi_{F_\mu} \text{ and } \bar{h}_{F_\mu^{\text{pert}}}=\bar{h}_{F_\mu} \text{ over $(0,1)$ almost surely}.
\end{equation}
Thus, $F_\mu^{\text{pert}}$ fulfills the multifractal formalism at $h$, whenever $F_\mu$ does.

\subsection{Topologically transitive subshifts of finite type avoiding the set of zeros of $\psi$}

For $k\ge 0$ and $x\in[0,1)$, let $x|_k$ be the unique word $w\in\Sigma_k$ such that
\[
\lambda(w)\le x< \lambda(w)+2^{-k},
\]
as well as $1|_k=1\cdots 1$ for $k\ge 1$.

\medskip

Let $\mathcal{Z}=\psi^{-1}(\{0\})\cap[0,1]$. We have assumed that $\mathcal{Z}$ is finite ((A2)).

\medskip

For $k\ge 2$ define the set of forbidden words by
\[
\mathcal{F}_k=\bigcup_{x\in \mathcal{Z}} \mathcal{F}_k(x),
\]
where
\[
\mathcal{F}_k(x)=\left\{\begin{array}{ll}
\{x|_k\}, & \text{if } x\not\in \lambda(\Sigma_*), \\
\{w\in\Sigma_k: 0\le \lambda(x|_k)-\lambda(w)\le 2^{-k}\}, & \text{otherwise}.
\end{array}
\right.
\]

Define the subshift of finite type with respect to $\mathcal{F}_k$ by
\[
X_k=\{t\in \Sigma: \sigma^m(t)|_k\not\in \mathcal{F}_k,\ \forall \ m\ge 0\}.
\]
Clearly for small $k$, the subshift $X_k$ might be a empty set. But, since $\mathcal{Z}$ is a finite set, it is easy to see that $X_k$ is not empty for all $k$ large enough. In fact, denote by $\delta=\min\{|x-y|: x,y\in\mathcal{Z},\ x\neq y\}>0$ and $k_0=[-\log_2\delta]+3$. Then for any $x,y\in\mathcal{Z}$ with $x<y$, there exists at least one word $w\in\Sigma_{k_0-1}$ such that $x<\lambda(w)<y$ thus $\lambda(x|_{k_0-1})<\lambda(w)<\lambda(y|_{k_0-1})$, since $y-x\ge \delta\ge 2^{-(k_0-2)}$. This ensures that for $k\ge k_0$, for all $w\in \mathcal{F}_k$,
%
his brother $w'$ (the unique $w'\in \Sigma_k$ such that $w'|_{k-1}=w|_{k-1}$, $w'\neq w$) is an admissible word. Thus for any $u\in\Sigma_{k-1}$, at least one of $u0, u1$ is allowed in $X_k$, which also implies that for each $u\in\Sigma_{k-1}$, there exists an infinite word $t\in X_k$ such that $t|_k=u0$ or $t|_k=u1$. So for $k\ge k_0$, the Hausdorff distance between $X_k$ and $\Sigma$ is not greater than $2^{-k}$, that is
\begin{equation}\label{Hdist}
\mathrm{dist}_H(X_k,\Sigma):=\max\{ \sup_{s\in X_k} \inf_{t\in\Sigma}\rho(s,t), \sup_{s\in\Sigma}\inf_{t\in X_k} \rho(s,t)\} \le 2^{-k},
\end{equation}
thus it converges to $0$ when $k\to\infty$.

Since $X_k$ is a increasing sequence (it is easy to see that $\Sigma\setminus X_k \supset \Sigma\setminus X_{k+1}$),  $\overline{\dim}_B X_k$ increases and converges to $1$ as $k\to\infty$. Otherwise  $\overline{\dim}_B \bigcup_{k} X_k <1$, thus $\bigcup_{k} X_k$ is not dense in $\Sigma$, which is in contradiction  with~\eqref{Hdist}. Here $\overline{\dim}_B$ is the upper box-counting dimension (see~\cite{Falc} for the definition and properties).

It is known that any subshift of finite type can be decomposed into several disjoint closed sets $X_{k,1},\cdots,X_{k,m}$, $m\ge 1$, and each of them is a topologically transitive subshift of finite type. This can be deduced from the non-negative matrix analysis that one can always decomposes reducible matrix into several irreducible pieces.

The finite stability of $\overline{\dim}_B$ (see Section 3.2 in~\cite{Falc}) implies
\[
\overline{\dim}_B X_k=\max_{i=1,\cdots,m} \overline{\dim}_B X_{k,i},
\]
so we can choose one of the $X_{k,i}$ such that $\overline{\dim}_B X_{k,i}=\overline{\dim}_BX_{k}$ and also denote it as $X_k$. Then we obtain a sequence of topologically transitive subshift of finite type $(X_k)_{k\ge 1}$ such that the upper box-counting dimension $\overline{\dim}_B X_k$ converges to $1$. We prove that this sequence converge to $\Sigma$ in the Hausdorff distance: 

\smallskip

Suppose that  it is not the case, then there exist an $\epsilon>0$ and a subsequence $(X_{k_j})_{j\ge 1}$ such that $\mathrm{dist}_H(X_{k_j},\Sigma)\ge \epsilon$ for $j\ge 1$. Fix an integer $N> -\log_2\epsilon+1$. Then $\mathrm{dist}_H(X_{k_j},\Sigma)\ge \epsilon$ implies that there exist a $w_j\in\Sigma_N$ such that $X_{k_j} \cap [w_j]=\emptyset$. Since $\#\Sigma_N =2^N$ is finite, then there exist $w_*\in \Sigma_N$ and a subsequence $(X_{k_j'})_{j\ge 1}$ of $(X_{k_j})_{j\ge 1}$ such that $X_{k_j'}\cap [w_*]=\emptyset$ for $j\ge 1$.

Since $X_{k'_j}$ is a subshift of finite type,  $X_{k_j'}\cap [w_*]=\emptyset$ implies that
\[
X_{k_j'} \subset X_*:=\{t\in\Sigma: \sigma^m(t)|_N\neq w_*,\ \forall \ m\ge 0\}.
\]
Denote by $B_*$ the transition matrix of $X_*$ and $\lambda_*$ the maximal eigenvalue of $B_*$. Due to the standard Perron-Frobenius theory (\cite{Sen}, Thm 1.1),  $\lambda_*$ is strictly less than the maximal eigenvalue of the transition matrix of the full shift, which is equal to $2$. This yields that $\overline{\dim}_B X_* =\log \lambda_*/\log 2 <1$, which is in contradiction with the fact that $\overline{\dim}_B X_*\ge \lim_{j\to\infty} \overline{\dim}_B X_{k_j'}=1$.

\smallskip

To end this section, since $\psi$ is $r_0$-smooth, for each $k\ge k_0$ we can easily find a constant $c_{\psi,k}>0$ such that for each $t\in X_k$,
\begin{equation}\label{cpsi}
|\psi(\lambda(\sigma^m(t)|_n))|\ge c_{\psi,k},\ \forall\ m\ge 0,\ n\ge k.
\end{equation}
This is the main property required in our proof, which clearly would not hold if we considered any $t\in\Sigma$.

\subsection{Lower bound estimation}\label{Lbe}

For $k\ge k_0$ and $q\in\mathbb{R}$, denote by $\mu_q^{(k)}$ the Gibbs measure on $(X_k,\sigma)$ with potential $q\varphi$.

Apply~\eqref{gibbs} both to $\mu^{(k)}_q$ and $\mu$, together with~\eqref{tau2} we have for any $t\in X_k$ and $n\ge 1$,
\begin{multline*}
(C(\varphi)^{|q|}C(q\varphi))^{-1} \mu([t|_n])^q e^{-n\tau_{\varphi}(q)} e^{-n\frac{P_\Sigma(q\varphi)-P_{X_k}(q\varphi)}{\log2}} \le \mu_{q}^{(k)}([t|_n]) \\\le C(\varphi)^{|q|}C(q\varphi) \mu([t|_n])^q e^{-n\tau_{\varphi}(q)}e^{-n\frac{P_\Sigma(q\varphi)-P_{X_k}(q\varphi)}{\log2}}.
\end{multline*}
By using large deviation method as in~\cite{BBP}, it is standard to prove that for $\mu_q^{(k)}$-almost every $t\in X_k$,
\begin{eqnarray}
\lim_{n\to\infty} \frac{\log \mu([t|_n])}{\log 2^{-n}}
&=& \lim_{n\to\infty} \frac{\log \max_{w\in\mathcal{N}(t|_n)}\mu([w])}{\log 2^{-n}} \nonumber \\
&=& \tau'_\mu(q)+\frac{\mathrm{d}}{\mathrm{d} q} \frac{P_\Sigma(q\varphi)-P_{X_k}(q\varphi)}{\log2}:= \alpha^{(k)}_q, \label{alphakq}
\end{eqnarray}
and
\begin{eqnarray}
&&\lim_{n\to\infty} \frac{\log \mu^{(k)}_q([t|_n])}{\log 2^{-n}} \nonumber \\
&=& \tau^*_\mu(\tau'_\mu(q))-\frac{-q\frac{\mathrm{d}}{\mathrm{d} q}(P_{\Sigma}(q\varphi)-P_{X_k}(q\varphi))+(P_\Sigma(q\varphi)-P_{X_k}(q\varphi))}{\log 2}:=D^{(k)}_q. \label{dkq}
\end{eqnarray}

Let $h^{(k)}_q=s_0-1/p_0+\alpha^{(k)}_q/p_0$. Then the above two equations together with Theorem 1 of~\cite{BSw} and~\eqref{perturb} imply that
\begin{equation}\label{mukq}
\mu_q^{(k)} \text{ is carried by } \{x\in [0,1]: \bar{h}_{F^{\text{pert}}_\mu}(x)=h^{(k)}_q\} \text{ and } \dim_H (\mu_q^{(k)}) \ge D^{(k)}_q.
\end{equation}

We deduce from $\mu_q^{(k)}$ two Borel measures $\mu_{q,G}^{(k)}$, $\mu_{q,R}^{(k)}$ carried by the graph and range of $F^{\text{pert}}_\mu$ respectively in the following way:
\begin{itemize}
\item For any Borel set $A\subset G_{F^{\text{pert}}_\mu}([0,1])$, let
\[
\mu_{q,G}^{(k)}(A)=\mu_q^{(k)}\left(t\in X_k : (\lambda(t),F^{\text{pert}}_\mu(\lambda(t)))\in A\right);
\]
\item For any Borel set $B\subset R_{F^{\text{pert}}_\mu}([0,1])$, we have
\[
\mu_{q,R}^{(k)}(B)=\mu_q^{(k)}\left(t\in X_k: F^{\text{pert}}_\mu(\lambda(t))\in B\right).
\]
\end{itemize}

As the essential intermediate result of this paper, we have the following theorem.
\begin{theorem}\label{keythm}
With probability $1$ for all $q\in\mathbb{R}$ with $0<h^{(k)}_q <1$, we have
\begin{eqnarray*}
\dim_H (\mu_{q,G}^{(k)}) &\ge & \gamma^{(k)}_{q,G}:=\frac{D^{(k)}_q}{h^{(k)}_q} \wedge \Big(1-h^{(k)}_q+D^{(k)}_q\Big), \\
\dim_H (\mu_{q,R}^{(k)}) &\ge &  \gamma^{(k)}_{q,R}:=\frac{D^{(k)}_q}{h^{(k)}_q} \wedge 1.
\end{eqnarray*}
\end{theorem}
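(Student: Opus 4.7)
The plan is to apply the potential-theoretic (Frostman energy) criterion, Theorem~4.13 of~\cite{Falc}: since $\mu_{q,G}^{(k)}$ and $\mu_{q,R}^{(k)}$ are the push-forwards of $\mu_q^{(k)}$ by $t\mapsto (\lambda(t),F(\lambda(t)))$ and $t\mapsto F(\lambda(t))$ respectively (with $F:=F_\mu^{\mathrm{pert}}$), to prove $\dim_H \mu_{q,G}^{(k)}\ge \gamma_{q,G}^{(k)}$ (resp.\ $\dim_H \mu_{q,R}^{(k)}\ge \gamma_{q,R}^{(k)}$) it suffices, for every $\delta>0$, to show almost-sure finiteness, at $\gamma=\gamma_{q,\bullet}^{(k)}-\delta$, of the $\gamma$-energies
\begin{align*}
I_\gamma^G &:= \iint_{X_k^2} \bigl(|s'{-}t'|^2+|F(s'){-}F(t')|^2\bigr)^{-\gamma/2}\, d\mu_q^{(k)}(s)\,d\mu_q^{(k)}(t), \\
I_\gamma^R &:= \iint_{X_k^2} \bigl|F(s'){-}F(t')\bigr|^{-\gamma}\, d\mu_q^{(k)}(s)\,d\mu_q^{(k)}(t),
\end{align*}
where $s':=\lambda(s)$ and $t':=\lambda(t)$. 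By Fubini this reduces to a pointwise control of $\mathbb{E}[(|s'{-}t'|^2+|F(s'){-}F(t')|^2)^{-\gamma/2}]$ and $\mathbb{E}[|F(s'){-}F(t')|^{-\gamma}]$ for fixed $(s,t)\in X_k\times X_k$, followed by integration against $\mu_q^{(k)}\otimes\mu_q^{(k)}$.

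The heart of the argument is a conditional density estimate. For $s\ne t\in X_k$ set $n_0=n_0(s,t)=\min\{n\ge 1:s_n\ne t_n\}-1$, so that $|s'-t'|\asymp 2^{-n_0}$. Decompose
\[
F(s')-F(t')=\sum_{w\in\Sigma_*}\pi_w\, d_w\,\bigl(\psi_w(s')-\psi_w(t')\bigr).
\]
Fix a large integer $K_0$ depending only on $\psi$ and $k$, and set $w^*=s|_{n_0+K_0}$; since $\psi_{w^*}$ is concentrated at scale $2^{-(n_0+K_0)}\ll |s'-t'|$ around $\lambda(w^*)$, property~\eqref{cpsi} applied at scale $n_0+K_0$ combined with the $r_0$-smoothness and decay of $\psi$ yields
\[
\bigl|\psi_{w^*}(s')-\psi_{w^*}(t')\bigr|\ge \tfrac{1}{2}c_{\psi,k}
\]
uniformly in $(s,t)$, provided $K_0$ is large enough. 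Conditioning on $(\pi_w)_{w\ne w^*}$ and using (A3), the conditional density of $F(s')-F(t')$ is thus bounded by
\[
c(s,t)\ :=\ C_k\,\|f_{w^*}\|_\infty\, 2^{(n_0+K_0)(s_0-1/p_0)}\, \mu([w^*])^{-1/p_0}
\]
for some constant $C_k>0$.

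For $\epsilon>0$ introduce the ``typical'' set
\[
\widehat X_q^{(k,\epsilon)}=\Bigl\{t\in X_k : \mu([t|_n])\ge 2^{-n(\alpha_q^{(k)}+\epsilon)}\text{ and }\mu_q^{(k)}([t|_n])\le 2^{-n(D_q^{(k)}-\epsilon)}\text{ for all } n\ge N_{q,\epsilon}\Bigr\},
\]
which by~\eqref{alphakq}--\eqref{dkq} has $\mu_q^{(k)}$-mass arbitrarily close to $1$ once $N_{q,\epsilon}$ is large. On this set, using (A3) to absorb $\|f_{w^*}\|_\infty\le 2^{n_0\epsilon}$, the density bound simplifies to $c(s,t)\le C_{\epsilon,k}\, 2^{n_0(h_q^{(k)}+3\epsilon)}$. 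A routine layer-cake computation now yields, for $(s,t)$ with $n_0(s,t)=n$,
\[
\mathbb{E}\bigl[|F(s'){-}F(t')|^{-\gamma}\bigr]\ \lesssim\ c(s,t)^\gamma\ \lesssim\ 2^{n\gamma(h_q^{(k)}+3\epsilon)} \quad (\gamma<1),
\]
while the graph expectation is dominated by $c(s,t)^\gamma$ for $\gamma<1$ and by $c(s,t)\,|s'{-}t'|^{1-\gamma}\lesssim 2^{n(\gamma+h_q^{(k)}+3\epsilon-1)}$ for $\gamma>1$. Integrating against $\mu_q^{(k)}\otimes \mu_q^{(k)}$ restricted to $\widehat X_q^{(k,\epsilon)}\times\widehat X_q^{(k,\epsilon)}$ and using the mass bound $\mu_q^{(k)}\otimes\mu_q^{(k)}(\{(s,t):n_0(s,t)=n\})\le 2^{-n(D_q^{(k)}-\epsilon)}$, each estimate becomes a geometric series in $n$, convergent precisely when $\gamma(h_q^{(k)}+3\epsilon)<D_q^{(k)}-\epsilon$ (range and small-$\gamma$ graph) or $\gamma+h_q^{(k)}+3\epsilon-1<D_q^{(k)}-\epsilon$ (large-$\gamma$ graph); letting $\epsilon\downarrow 0$ yields $\gamma<\gamma_{q,R}^{(k)}$ and $\gamma<\gamma_{q,G}^{(k)}$ respectively.

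To obtain the ``almost surely for all $q$'' uniformity, one exploits that $q\mapsto h_q^{(k)}, D_q^{(k)}, \gamma_{q,G}^{(k)}, \gamma_{q,R}^{(k)}$ are analytic on $\mathbb{R}$ by~\eqref{tau2}--\eqref{tau4} applied to $X_k$. The above argument is first run for a countable dense set $Q\subset \{q:0<h_q^{(k)}<1\}$ along a countable sequence $\epsilon_n\downarrow 0$, yielding a full-probability event on which the lower bounds hold for every $q\in Q$. A covering argument on compact sub-intervals of $\{q:0<h_q^{(k)}<1\}$, together with continuous dependence in $q$ of the typical sets $\widehat X_q^{(k,\epsilon)}$, of the constants, and of the measures $\mu_q^{(k)}$, then upgrades this to all $q$ with $0<h_q^{(k)}<1$. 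I expect the main technical obstacles to be (i) the separation estimate for $\psi_{w^*}(s')-\psi_{w^*}(t')$ when $\psi$ is not compactly supported, which requires a careful tail analysis combining~\eqref{cpsi} with the decay of $\psi$; and (ii) the uniformity step, where the exceptional null set must be chosen independently of $q$---this is where the paper's construction of the subshifts $X_k$ and the quantitative form of (A3) become essential.
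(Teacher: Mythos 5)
Your overall strategy coincides with the paper's: the potential-theoretic criterion applied to the images of $\mu_q^{(k)}$ on the graph and range, a conditional density bound for $F(\lambda(s))-F(\lambda(t))$ obtained by isolating a single coefficient $\pi_{w^*}$ whose wavelet contribution is bounded below thanks to \eqref{cpsi}, typical sets cut out by the Gibbs scaling \eqref{alphakq}--\eqref{dkq}, and a scale-by-scale summation; your layer-cake bounds give the same exponents as the paper's Lemma~\ref{kernelcondi}. However, two steps contain genuine gaps. First, your scale parameter $n_0(s,t)$ (length of the common prefix in $\Sigma$) does not satisfy $|\lambda(s)-\lambda(t)|\asymp 2^{-n_0}$: for $s,t$ in adjacent but distinct dyadic cylinders the common prefix can be empty while $|\lambda(s)-\lambda(t)|$ is arbitrarily small. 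This breaks both the separation estimate $|\psi_{w^*}(\lambda(s))-\psi_{w^*}(\lambda(t))|\ge \tfrac12 c_{\psi,k}$ (which requires $|\lambda(s)-\lambda(t)|\gg 2^{-|w^*|}$, so that after rescaling by $T_{w^*}^{-1}$ the two points are far apart) and the matching of the kernel singularity with the mass bound $2^{-n(D_q^{(k)}-\epsilon)}$ at scale $n=n_0$. The paper's decomposition via the neighbor sets $\mathcal{N}(w)$ and the pair classes $\mathcal{P}_p$ exists precisely to repair this, and you need it or an equivalent device.

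Second, and more seriously, the uniformity in $q$ is asserted but not established. Your Fubini argument yields, for each fixed $q$, an almost sure bound whose exceptional null set depends on $q$; passing to a countable dense set is immediate, but the upgrade to all $q$ via ``continuous dependence'' of the typical sets, constants and measures is not a routine continuity statement: the energies are integrals of singular kernels against measures $\mu_q^{(k)}$ that vary with $q$, and nothing in your argument lets you interpolate from the dense set to its complement. The paper's solution is structural rather than topological: it places $\sup_{q\in U_\epsilon(\bar q)}$ \emph{inside} the expectation, splits each summand into a random factor (the kernel and the indicators \eqref{indicatora}, \eqref{indicatorc}, all reduced to the single reference parameter $\bar q$ via the comparison $\sup_{q\in U_\epsilon(\bar q)}\mathcal{K}_{\gamma^{(k)}_{q,S}-\delta}\le \mathcal{K}_{\gamma^{(k)}_{\bar q,S}-\delta-\epsilon}$) and a deterministic factor (the indicator \eqref{indicatorb}, together with $\bigcup_{q\in U_\epsilon(\bar q)}E_n^{(k)}(q,\epsilon)\subset E_n^{(k)}(\bar q,2\epsilon)$), and controls $\liminf_m A_{p,m}$ uniformly in $q$ through the telescoping $A_{p,m_p}+\sum_{m\ge m_p}|A_{p,m+1}-A_{p,m}|$ --- which is the reason the oscillation indicator $\mathbf{1}^{(c)}$ and the increment estimates on $\Delta\bar{\mathcal{K}}$ appear at all. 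Without some version of this mechanism you only obtain ``for each $q$, almost surely'', which is strictly weaker than Theorem~\ref{keythm} and insufficient for the uniform conclusion of Theorem~\ref{mainthm}.
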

Let us show how it makes it possible to conclude.

For $k\ge k_0$ let $I^{(k)}=\{h_q^{(k)}:q\in\mathbb{R}\}\cap (0,1)$ and $J^{(k)}=\bigcap_{p\ge k} I^{(p)}$. Also for $S\in\{G,R\}$ define the function
\[
f^{(k)}_S: h_q^{(k)}\in I^{(k)}\mapsto \gamma^{(k)}_{q,S}.
\]
Due to~\eqref{mukq}, $\mu_{q,S}^{(k)}$ is carried by $S_{F^{\text{pert}}_\mu}(h^{(k)}_q)$. Thus Theorem~\ref{keythm} implies that, for each $k\ge k_0$, with probability 1 for all $h\in J^{(k)}$,
\[
d^S_{F^{\text{pert}}_\mu}(h)\ge f^{(p)}_S(h), \ \forall\ p\ge k.
\]
Then to end the proof of Theorem~\ref{mainthm}, it only remains to show that for each $q\in\mathbb{R}$,
\begin{equation}\label{hdconv}
h^{(k)}_q \to h_q=s_0-1/p_0+\tau'_\mu(q)/p_0 \text{ and } D^{(k)}_q \to D_q= \tau^*_\mu(\tau_\mu'(q)) \text{ as } k\to\infty,
\end{equation}
which implies that $\bigcup_{k\ge k_0}J^{(k)} =\{s_0-1/p_0+\tau'_\mu(q)/p_0:q\in\mathbb{R}\}\cap (0,1)$ and for any compact subset $I\subset \bigcup_{k\ge k_0}J^{(k)}$, the functions $f^{(k)}_S$, $S\in\{G,R\}$ restricted to $I$ converge uniformly to
\[
f_S: h_q\in I \mapsto \gamma_{q,S},
\]
where
\[
\gamma_{q,G}:=\frac{D_q}{h_q} \wedge \Big(1-h_q+D_q\Big) \ \text{ and } \ \gamma_{q,R}:=\frac{D_q}{h_q} \wedge 1.
\]
This implies that with probability 1 for all $h\in I$ and $\alpha=hp_0+1-s_0p_0$,
\[
d^G_{F^{\text{pert}}_\mu}(h) \ge \frac{\tau^*_\mu(\alpha)}{h} \wedge \Big(1-h+\tau^*_\mu(\alpha)\Big)  \ \text{ and }\ d^R_{F^{\text{pert}}_\mu}(h) \ge \frac{\tau^*_\mu(\alpha)}{h} \wedge 1.
\]
Together with Theorem A and Section~\ref{randomspec}, we get the conclusion by taking a sequence of $I$ converging to $\{s_0-1/p_0+\tau'_\mu(q)/p_0:q\in\mathbb{R}\}\cap (0,1)$.

\smallskip

Now we prove~\eqref{hdconv}. This can be done due to~\eqref{alphakq},~\eqref{dkq} and the following lemma:

\begin{lemma}
Given $q\in\mathbb{R}$, we have $\lim_{k\to\infty} P_{X_k}(q\varphi)=P_\Sigma(q\varphi)$. Consequently, since these functions are convex and analytic, $P_{X_k}(q\phi)$ and $\frac{\mathrm{d}}{\mathrm{d}q}P_{X_k}(q\phi)$ converge uniformly on compact intervals to $P_{\Sigma}(q\phi)$ and $\frac{\mathrm{d}}{\mathrm{d}q}P_{\Sigma}(q\phi)$ respectively.
\end{lemma}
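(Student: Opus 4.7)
The lemma splits into a pointwise convergence claim and a uniform/derivative claim; I dispatch the second first since it is soft. Each $q\mapsto P_Y(q\varphi)$, for $Y\in\{X_k\}_k\cup\{\Sigma\}$, is convex and finite on all of $\mathbb{R}$ (as a supremum of affine maps, via the variational principle), so pointwise convergence of convex functions on an open subset of $\mathbb{R}$ is automatically uniform on compact subsets (Rockafellar, \emph{Convex Analysis}, Thm.~10.8). The limit $P_\Sigma(\cdot\varphi)$ is analytic on $\mathbb{R}$ by Corollary~5.27 of~\cite{Ru}, hence $C^1$, and the classical argument for convex functions --- one-sided derivatives of $P_{X_k}(\cdot\varphi)$ are monotone in their argument, converge at every point to the one-sided derivatives of $P_\Sigma(\cdot\varphi)$ wherever the latter is differentiable, and hence converge uniformly on compact intervals by a Dini-type argument --- then upgrades this to uniform convergence of $\frac{\mathrm{d}}{\mathrm{d}q}P_{X_k}(q\varphi)$. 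So it suffices to prove pointwise convergence.

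For each fixed $q\in \mathbb{R}$, the upper bound $P_{X_k}(q\varphi)\leq P_\Sigma(q\varphi)$ is immediate from the variational principle and the inclusion $\mathcal{M}_\sigma(X_k)\subset \mathcal{M}_\sigma(\Sigma)$. For the lower bound, let $\mu^*$ be the equilibrium state of $q\varphi$ on $\Sigma$; I would build, for each large $k$, a $\sigma$-invariant probability measure $\nu_k\in\mathcal{M}_\sigma(X_k)$ with $h_{\nu_k}(\sigma)+q\int\varphi\, \mathrm{d}\nu_k\to h_{\mu^*}(\sigma)+q\int\varphi\, \mathrm{d}\mu^*=P_\Sigma(q\varphi)$. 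Shannon-McMillan-Breiman and Birkhoff's theorem supply, for every $\varepsilon>0$ and every $n$ large, a ``typical'' set $T_n\subset\Sigma_n$ of cardinality $\geq e^{n(h_{\mu^*}(\sigma)-\varepsilon)}$ on each of whose cylinders $\frac{1}{n}S_n\varphi$ lies within $\varepsilon$ of $\int \varphi\, \mathrm{d}\mu^*$. Since $X_k$ is a topologically transitive subshift of finite type it enjoys the specification property with some gap $g_k$; and since $|\mathcal{F}_k|\leq 2|\mathcal{Z}|$ is bounded while the forbidden words have length $k\to\infty$, the language of $X_k$ contains an arbitrarily large proportion of words of any given fixed length as $k\to\infty$. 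For $k$ large enough, each $w\in T_n$ can therefore be modified at $O(1)$ of its letters and concatenated with a fixed connector word $c_k$ of length $\leq g_k$ to yield a distinct periodic point of $X_k$ of period $n+g_k$ whose Birkhoff average of $\varphi$ stays within $O(\varepsilon)$ of $\int\varphi\, \mathrm{d}\mu^*$. Plugging these into Bowen's periodic-orbit formula for $P_{X_k}(q\varphi)$ gives
\[
P_{X_k}(q\varphi)\ \geq\ \tfrac{n}{n+g_k}\bigl(h_{\mu^*}(\sigma)-\varepsilon+q\textstyle\int\varphi\, \mathrm{d}\mu^*-|q|\varepsilon\bigr)-O(g_k/n);
\]
letting $n\to\infty$ and then $\varepsilon\to 0$ yields $\liminf_{k}P_{X_k}(q\varphi)\geq P_\Sigma(q\varphi)$.

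The main obstacle is this combinatorial lower bound. One must control two things in tandem: the specification gap $g_k$ of $X_k$, and the way the forbidden words $\mathcal{F}_k$ deplete the language of $X_k$, in order to guarantee that turning a typical full-shift word into an $X_k$-admissible periodic word costs a negligible amount of entropy and an $o(n)$ Birkhoff sum correction; one should also check that after passing to the transitive component, the selected component of $\widetilde{X}_k$ still carries enough of these orbits. All the uniformity claims in the lemma are then automatic from the convex-analytic remarks of the first paragraph.
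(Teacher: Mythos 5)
Your soft reductions are fine: $P_{X_k}(q\varphi)\le P_\Sigma(q\varphi)$ is immediate (already from \eqref{pressure}, since the sum for $X_k$ is a subsum of the one for $\Sigma$), and the convex-analysis upgrade from pointwise convergence to local uniform convergence of the pressures and their derivatives is exactly what the paper itself invokes without proof. The problem is the lower bound, and there your sketch has a step that is false as stated. A $\mu^*$-typical word $w\in T_n$ cannot be made $X_k$-admissible by changing $O(1)$ letters: for fixed $k$ the forbidden set $\mathcal F_k$ is a union of at most $2\#\mathcal Z$ cylinders of generation $k$, of positive $\mu^*$-measure $\rho_k>0$, so by the ergodic theorem a typical length-$n$ word contains $\approx\rho_k n=\Theta(n)$ occurrences of forbidden patterns. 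The repair needs density-$\rho_k$ surgery (e.g.\ flipping the last letter of each occurrence, which the paper's ``brother word'' observation makes admissible), together with (i) a bound $O(\rho_k n)$ on the resulting change in $S_n\varphi$, (ii) an entropy loss $O(H(\rho_k)n)$ from the non-injectivity of the surgery, (iii) control of newly created forbidden occurrences when modifications chain or overlap, and (iv) the verification that the resulting periodic orbits sit in the \emph{selected} transitive component of $X_k$ (which is only transitive, not mixing, so strong specification is not available; one only has bounded-length connectors in suitable residue classes). Since $\rho_k\to 0$, this would yield $P_{X_k}(q\varphi)\ge P_\Sigma(q\varphi)-\epsilon_k$ with $\epsilon_k\to0$, which suffices --- but none of (i)--(iv) is carried out, and you yourself flag this block as ``the main obstacle''. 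So the proposal is a plausible programme, not a proof.

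For comparison, the paper avoids all of this combinatorics with a short soft argument in the spirit of Proposition 2 of \cite{GeMi}: if $\liminf_k P_{X_k}(q\varphi)=P_\Sigma(q\varphi)-\delta$ with $\delta>0$, take a weak$^*$ limit $\mu_q^*$ of the equilibrium states $\mu_q^{(k_j)}$ along the offending subsequence. The Gibbs inequality \eqref{gibbs} holds with a constant \emph{independent of the subshift}, and since $X_k\to\Sigma$ in Hausdorff distance every cylinder $[t|_n]$ eventually meets $X_k$; comparing the two Gibbs estimates gives $\mu_q([t|_n])\le C^2 e^{-n\delta/2}\mu_q^*([t|_n])$, and summing over $\Sigma_n$ for $n$ large contradicts both measures having total mass $1$. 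If you want a constructive route instead, you must actually supply the word-surgery estimates above; as written, the $O(1)$ claim is the point where the argument fails.
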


\begin{proof}

The idea is borrowed from the proof of Proposition 2 in~\cite{GeMi}.

Assume that this is not the case for some $q\in\mathbb{R}$. Since $P_{X_k}(q\varphi) \le P_\Sigma(q\varphi)$, let $P_*(q\varphi)=\liminf_{k\to\infty} P_{X_k}(q\varphi)$ and let $\delta=P_\Sigma(q\varphi)-P_*(q\varphi)>0$.

Take a subsequence $(\mu^{(k_j)}_q)_{j\ge1}$ converging to some probability measure $\mu^*_q$ in the $\text{weak}^*$ topology. Due to Theorem B, for any $t\in\Sigma$ and $n\ge 1$,
\[
(C(\varphi)^{|q|}C(q\varphi))^{-1} \le \frac{\mu_q([t|_n])}{\exp(S_nq\varphi(t)-nP_\Sigma(q\varphi))} \le (C(\varphi)^{|q|}C(q\varphi)).
\]
Since $X_k$ converges to $\Sigma$ in sense of Hausdorff distance, then for all $k$ large enough, we have $X_k\cap [t|_n]\neq \emptyset$, thus
\[
(C(\varphi)^{|q|}C(q\varphi))^{-1} \le \frac{\mu_q^{(k)}([t|_n])}{\exp(S_nq\varphi(t)-nP_{X_k}(q\varphi))} \le (C(\varphi)^{|q|}C(q\varphi)).
\]
This implies
\[
\mu_q([t|_n])\le \mu^*_q([t|_n]) \cdot (C(\varphi)^{|q|}C(q\varphi))^{2}\cdot \exp(-n\delta/2).
\]
Taking $n$ large enough sothat $(C(\varphi)^{|q|}C(q\varphi))^{2}\cdot \exp(-n\delta/2)<1$, this is in contradiction with the fact that both $\mu_q$ and $\mu_q^*$ are probability measures.

\end{proof}

\section{Proof of Theorem~\ref{keythm}}\label{prfkeythm}

From now on we fix a $k\ge k_0$ such that $X_k\neq \emptyset$. 

\subsection{Main proof}

\begin{proof}

Recall~\eqref{neighbor} that the set of neighbor words of $w\in\Sigma_*$ is
\begin{equation*}
\mathcal{N}(w)=\Big\{u\in\Sigma_{|w|}: \big|\sum_{i=1}^{|w|} (u_i-w_i) \cdot 2^{-i}\big| \le 2^{-|w|}\Big\}.
\end{equation*}

For $p\ge 1$ let $\mathcal{P}_p$ be the subset of pairs of elements of $\Sigma_{p+1}$ defined as
\begin{equation}
\mathcal{P}_p=\{(u,v)\in\Sigma_{p+1}\times \Sigma_{p+1}: v|_p\in\mathcal{N}(u|_p),\ v\not\in \mathcal{N}(u)\}.
\end{equation}
Then for any $s,t\in\Sigma$ with $|s-t|>0$, there exists a unique $p\ge 1$ such that $\mathbf{1}_p(s,t)=1$, where the indicator function is defined by
\[
\mathbf{1}_p(s,t)=\left\{
\begin{array}{ll}
1, & \text{if } (s|_{p+1},t|_{p+1})\in \mathcal{P}_p;\\
0, & \text{otherwise}.
\end{array}
\right.
\]

By construction we know that if $\mathbf{1}_p(s,t)=1$, then
\[
\inf_{s'\in [s|_{p+1}],t'\in [t|_{p+1}]} |s'-t'|\ge 2^{-p-1} \text{ and } \sup_{s'\in [s|_{p+1}],t'\in [t|_{p+1}]} |s'-t'| \le 2^{-p+1}.
\]

Recall that for $w\in\Sigma_*$,
\[
T_w:x\in\mathbb{R}\mapsto 2^{-|w|}\cdot x+\lambda(w).
\]
Then for any $s,t\in\Sigma$ with $\mathbf{1}_p(s,t)=1$, for any $m\ge 1$ we have
\[
T_{s|_{p+1+m}}^{-1}(\lambda(s))\in [0,1]\ \text{ and }\ |T_{s|_{p+1+m}}^{-1}(\lambda(s))-T_{s|_{p+1+m}}^{-1}(\lambda(t))|\ge 2^m.
\]
Since $\psi$ decays at infinity, due to~\eqref{cpsi}, there exists a large enough $N_{\psi,k}\ge 1$ such that for any $s,t\in X_k$ with $\mathbf{1}_p(s,t)=1$, for any $m\ge 1$ and $n\ge k$
\begin{equation}\label{gec}
|\psi_{s|_{p+1+N_{\psi,k}+m}}(\lambda(s)|_{p+1+N_{\psi,k}+m+n})-\psi_{s|_{p+1+N_{\psi,k}+m}}(\lambda(t)|_{p+1+N_{\psi,k}+m+n})| \ge \frac{c_{\psi,k}}{2}.
\end{equation}

Let $J_k=\{q\in\mathbb{R}: 0<h^{(k)}_q <1\}$.

For any $q\in J_k$, $\epsilon>0$ and $w\in\Sigma_*$, define
\begin{eqnarray}
\mathbf{1}_{w}^{(a)}(q,\epsilon)&=& \mathbf{1}_{\left\{ |d_w|\in \big[2^{-|w|(h^{(k)}_q+\epsilon)},2^{-|w|(h^{(k)}_q-\epsilon)}\big] \right\}} \label{indicatora} \\
\mathbf{1}_{w}^{(b)}(q,\epsilon)&=& \mathbf{1}_{\left\{ \mu^{(k)}_q([w])\in \big[2^{-|w|(D^{(k)}_q+\epsilon)},2^{-|w|(D^{(k)}_q-\epsilon)}\big] \right\}} \label{indicatorb} \\
\mathbf{1}_{w}^{(c)}(q,\epsilon)&=& \mathbf{1}_{\left\{\sup\limits_{s,t\in\bigcup_{u\in\mathcal{N}(w)} [u]} \big|F^{\text{pert}}_\mu(s)-F^{\text{pert}}_\mu(t)\big|\le 2^{-|w|(h^{(k)}_q-\epsilon)}\right\}}.\label{indicatorc}
\end{eqnarray}

For $n\ge 1$ define
\[
\Sigma^{(k)}_n(q,\epsilon)=\left\{w\in\Sigma_n: [w]\cap X_k\neq \emptyset \text{ and } \mathbf{1}_{w}^{(a)}(q,\epsilon)\cdot\mathbf{1}_{w}^{(b)}(q,\epsilon)\cdot\mathbf{1}_{w}^{(c)}(q,\epsilon)=1 \right\}.
\]
(In fact, $\mathbf{1}_{w}^{(b)}(q,\epsilon)=1$ implies $[w]\cap X_k\neq \emptyset$). Then let
\[
E_n^{(k)}(q,\epsilon)=\bigcap_{p\ge n} \bigcup_{w\in\Sigma_n^{(k)}(q,\epsilon)} [w] \ \text{ and }\ 
E^{(k)}(q)=\lim_{\epsilon\to 0^+}\lim_{n\to\infty} E_n^{(k)}(q,\epsilon).
\]
Due to~\eqref{perturb},~\eqref{alphakq} and~\eqref{dkq}, with probability 1, for all $q\in J_k$,
\begin{equation}\label{CEfh1}
E^{(k)}(q)\subset X_k\cap E_{F^{\text{pert}}_\mu}(h^{(k)}_q) \ \text{ and }\ \mu_q^{(k)}(E^{(k)}(q)) =1.
\end{equation}

For $\gamma>0$ define the Riesz-like kernel: for $s,t\in\Sigma_*\bigcup\Sigma$,
\begin{equation}\label{kernel}
\mathcal{K}_{\gamma}(s,t)=\left\{
\begin{array}{ll}
(|F^{\text{pert}}_\mu(\lambda(s))-F^{\text{pert}}_\mu(\lambda(t))|^2+|\lambda(s)-\lambda(t)|^2)^{-\frac{\gamma}{2}} \vee 1, & \text{if } \gamma\ge 1,\\
\ \\
|F^{\text{pert}}_\mu(\lambda(s))-F^{\text{pert}}_\mu(\lambda(t))|^{-\gamma} \vee 1, & \text{if } \gamma< 1.
\end{array}
\right.
\end{equation}

For $q\in J_k$ recall that
\begin{equation}\label{gammaGR}
\gamma_{q,G}^{(k)}= \frac{D^{(k)}_q}{h^{(k)}_q} \wedge \big(D^{(k)}_q+1-h^{(k)}_q\big) \ \text{ and }\ \gamma_{q,R}^{(k)}=\frac{D^{(k)}_q}{h^{(k)}_q} \wedge 1.
\end{equation}

For $q\in J_k$, $\delta>0$ and $\epsilon>0$ we define the $n^{\text{th}}$-energy for $n\ge 1$ and $S\in\{G,R\}$:
\[
\mathcal{I}_{n,\delta}^{S}(q,\epsilon)=\iint_{s,t\in E_n^{(k)}(q,\epsilon),s\neq t} \mathcal{K}_{\gamma_{q,S}^{(k)}-\delta}(s,t)\ \mathrm{d}\mu_q^{(k)}(s) \mathrm{d}\mu_q^{(k)}(t).
\]

Let $K$ be any compact subinterval of $J_k$. We assume for a while that we have proved that for any $\delta$ small enough, there exists $\epsilon_\delta>0$ such that for any $n\ge 1$, $\epsilon\in (0,\epsilon_\delta)$ and $S\in \{G, R\}$,
\begin{equation}\label{IGR}
\mathbb{E}\left(\sup_{q\in K} \mathcal{I}^{S}_{n,\delta}(q,\epsilon) \right)<\infty.
\end{equation}
The following lemma is a slight modification of Theorem 4.13 in~\cite{Falc} regarding the Hausdorff dimension estimate through the potential theoretic method.

\begin{lemma}\label{ptm}
Let $\nu$ be a Borel measure on $\mathbb{R}^{d}$ and let $E\subset\mathbb{R}^{d}$ be a Borel set such that $\nu(E)>0$. For any $\gamma>0$, if 
$$
\iint_{x,y\in E, x\neq y} |x-y|^{-\gamma}\vee 1 \ \mathrm{d}\nu(x)\mathrm{d}\nu(y)<\infty,
$$
then
$$\nu\left(\left\{ x\in E : \underline{\dim}_{\mathrm{loc}}\nu(x)= \liminf_{r\to 0^+} \frac{\log \nu(B(x,r))}{\log r}< \gamma\right\}\right)=0.$$
\end{lemma}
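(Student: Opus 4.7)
My plan is to run the classical potential-theoretic argument in the form tailored to the truncated kernel $(|x-y|^{-\gamma}\vee 1)$ and to the restriction of the double integral to $E\times E$. First, by Fubini's theorem the hypothesis implies that the potential
\[
\phi_\gamma(x):=\int_{E\setminus\{x\}}(|x-y|^{-\gamma}\vee 1)\,\mathrm{d}\nu(y)
\]
is finite for $\nu$-almost every $x\in E$. It therefore suffices to show that every $x$ in the exceptional set
\[
A:=\Bigl\{x\in E:\underline{\dim}_{\mathrm{loc}}\nu(x)<\gamma\Bigr\}
\]
satisfies $\phi_\gamma(x)=+\infty$, since then $A\subset\{\phi_\gamma=+\infty\}$ is $\nu$-null.

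Next, for $x\in A$ I fix $\varepsilon>0$ small enough so that $\underline{\dim}_{\mathrm{loc}}\nu(x)<\gamma-2\varepsilon$. By definition of the lower local dimension, there exists a sequence $r_n=r_n(x)\downarrow 0$ with $\nu(B(x,r_n))\ge r_n^{\gamma-\varepsilon}$ for all large $n$. On $B(x,r_n)\setminus\{x\}$ the integrand is bounded below by $r_n^{-\gamma}$, so
\[
\phi_\gamma(x)\ge\int_{B(x,r_n)\cap E\setminus\{x\}}|x-y|^{-\gamma}\,\mathrm{d}\nu(y)\ge r_n^{-\gamma}\,\nu\bigl(B(x,r_n)\cap E\setminus\{x\}\bigr).
\]
In the intended application the measure $\nu=\mu_q^{(k)}$ is carried by $E=E^{(k)}(q)$ (recall~\eqref{CEfh1}) and is non-atomic (being a Gibbs measure), so the right-hand side equals $r_n^{-\gamma}\nu(B(x,r_n))\ge r_n^{-\varepsilon}$, which tends to $+\infty$. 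This forces $\phi_\gamma(x)=+\infty$ for every $x\in A$, as required.

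I expect the only real subtlety to be the gap between $\nu(B(x,r)\cap E)$ and $\nu(B(x,r))$: the hypothesis controls only pairs lying in $E$, while the local dimension $\underline{\dim}_{\mathrm{loc}}\nu(x)$ is defined in terms of the full mass $\nu(B(x,r))$. In principle one would have to truncate $\nu$ to a subset of $E$ of positive mass and then argue for the truncated measure; but in our setting this step is free because $\mu_q^{(k)}(E^{(k)}(q))=1$, so $\nu$ is already concentrated on $E$. The second potential issue, namely an isolated atom of $\nu$ (which would allow $\phi_\gamma$ to be finite while the local dimension vanishes), does not arise either, again thanks to the non-atomicity of Gibbs measures on $\Sigma$. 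Once these two points are observed, the proof reduces to the Fubini step and the two-line estimate above.
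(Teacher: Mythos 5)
The paper gives no proof of this lemma (it only points to Theorem 4.13 of \cite{Falc}), so your attempt has to stand on its own. Its core --- Tonelli to get $\phi_\gamma(x)<\infty$ for $\nu$-a.e.\ $x\in E$, then the observation that $\underline{\dim}_{\mathrm{loc}}\nu(x)<\gamma$ would force $\phi_\gamma(x)\ge r_n^{-\gamma}\nu(B(x,r_n))\ge r_n^{-\varepsilon}\to\infty$ --- is the right mechanism, and using the \emph{strict} inequality in the lower local dimension to bypass the disjoint-annuli step of the classical $\limsup$ argument is legitimate.

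The two ``subtleties'' you flag are, however, genuine gaps, and your dismissals do not close them. The serious one is the discrepancy between $\nu(B(x,r)\cap E)$ and $\nu(B(x,r))$: the hypothesis only gives $\phi_\gamma(x)\ge r^{-\gamma}\nu(B(x,r)\cap E\setminus\{x\})$, while the conclusion is about the full ball mass, and Falconer's theorem avoids this only because there the measure is assumed to be carried by the set. Your claim that the step is ``free'' because $\mu_q^{(k)}(E^{(k)}(q))=1$ does not apply: in the paper the lemma is invoked with $E$ equal to (the image on the graph or range of) $E_n^{(k)}(q,\epsilon)$ for a \emph{fixed} $n$, on which $\mu_q^{(k)}$ is not fully concentrated --- only the double limit $E^{(k)}(q)$ has full mass, cf.~\eqref{CEfh1} --- and the conclusions for the various $n$ are then combined. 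So one must genuinely control $\nu(B(x,r)\setminus E)$ at $\nu$-typical points of $E$. This can be done, but it needs an extra idea: for instance, on $K_M=\{x\in E:\phi_\gamma(x)\le M\}$ one has the Frostman bound $\nu(B(x,r)\cap E)\le Mr^\gamma$, hence $\nu(B(y,r)\cap K_M)\le M(2r)^\gamma$ whenever $B(y,r)$ meets $K_M$, and then for $\gamma'<\gamma$ a Chebyshev--Fubini--Borel--Cantelli estimate on $\nu|_{K_M}\bigl\{x:\nu(B(x,2^{-j})\setminus E)\ge 2^{-j\gamma'}\bigr\}\le 2^{j\gamma'}\int_{E^c}\nu(B(y,2^{-j})\cap K_M)\,\mathrm{d}\nu(y)$ shows $\underline{\dim}_{\mathrm{loc}}\nu(x)\ge\gamma'$ for $\nu$-a.e.\ $x\in K_M$. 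The second issue, atoms, is really a defect of the statement rather than of your proof: $\nu=\delta_0$, $E=\{0\}$ satisfies the hypothesis (the off-diagonal integral is $0$) yet violates the conclusion, so non-atomicity must be added to the \emph{lemma}, not merely observed in the application; and for the range measure $\mu_{q,R}^{(k)}$ non-atomicity of the image measure is itself something to check. Finally, note that for what the lemma is actually used for --- lower-bounding $\dim_H(\mu_{q,S}^{(k)})$ --- both difficulties can be sidestepped entirely by applying the mass distribution principle to $\nu|_{K_M}$ using the Frostman bound above, which is the weaker but sufficient statement your two-line estimate genuinely proves.
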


Then, it easily follows from~\eqref{IGR} and Lemma~\ref{ptm} that, with probability 1, for all $q\in K$:
\begin{itemize}
\item For $\mu_{q,G}^{(k)}$-almost every $x\in\{(\lambda(t),F^{\text{pert}}_\mu(\lambda(t))): t\in E^{(k)}(q) \}$,
$$
\underline{\dim}_{\mathrm{loc}}\mu_{q,G}^{(k)}(x) \ge \gamma_{q,G}^{(k)}-\delta;
$$
\item For $\mu_{q,R}^{(k)}$-almost every $y\in\{F^{\text{pert}}_\mu(\lambda(t)): t \in E^{(k)}(q) \}$,
$$
\underline{\dim}_{\mathrm{loc}}\mu_{q,R}^{(k)}(y) \ge \gamma_{q,R}^{(k)}-\delta.
$$
\end{itemize}

Since $\delta$ can be taken arbitrarily small,  we get the conclusion by taking a countable sequence of compact subintervals $K_{j}\subset J_k$ such that $\bigcup K_{j}=J_k$.

\medskip

Now we prove \eqref{IGR}. 

\medskip

For any $\bar{q}\in K$ and $\epsilon>0$ we define the neighborhood of $\bar{q}$ in $K$:
\begin{equation}\label{Ulambda}
U_\epsilon(\bar{q})=\left\{q\in K: \max \left\{ \begin{array}{l}  |q-\bar{q}|, |\alpha^{(k)}_q-\alpha^{(k)}_{\bar{q}}|, |h^{(k)}_q-h^{(k)}_{\bar{q}}|, \\
|D^{(k)}_q-D^{(k)}_{\bar{q}}|,|\gamma_{q,G}^{(k)}-\gamma_{\bar{q},G}^{(k)}|, |\gamma_{q,R}^{(k)}-\gamma_{\bar{q},R}^{(k)}| \end{array} \right\} < \epsilon \right\}.
\end{equation}
By continuity of these functions, the set $U_\epsilon(\bar{q})$ is open in $K$.
 
Notice that for $q\in K$, $\delta>0$ and $S\in\{G,R\}$ the Riesz-like kernels $\mathcal{K}_{\gamma_{q,S}^{(k)}-\delta}$ are positive functions and, moreover, by the continuity of $F^{\text{pert}}_{\mu}$ we have for any $s,t\in \Sigma$,
\[
\lim_{m\to \infty} \mathcal{K}_{\gamma_{q,S}^{(k)}-\delta} (s|_m,t|_m)=\mathcal{K}_{\gamma_{q,S}^{(k)}-\delta}(s,t).
\]
Then by applying Fatou's lemma we get for any $q\in U_\epsilon(\bar{q})$,
\begin{eqnarray*}
&&\mathcal{I}^{S}_{n,\delta}(q,\epsilon) \\
&=& \iint_{s,t\in E_n^{(k)}(q,\epsilon),s\neq t} \lim_{m\to \infty} \mathcal{K}_{\gamma_{q,S}^{(k)}-\delta} (s|_m,t|_m) \ \mathrm{d} \mu_q^{(k)}(s) \mathrm{d} \mu_q^{(k)}(t) \\
&=&\sum_{p\ge 1} \iint_{s,t\in E_n^{(k)}(q,\epsilon); \atop \mathbf{1}_p(s,t)=1} \lim_{m\to \infty} \mathcal{K}_{\gamma_{q,S}^{(k)}-\delta} (s|_m,t|_m) \  \mathrm{d} \mu_q^{(k)}(s) \mathrm{d} \mu_q^{(k)}(t) \\
&\le& \sum_{p\ge 1} \liminf_{m\to \infty}  \iint_{s,t\in E_n^{(k)}(q,\epsilon); \atop \mathbf{1}_p(s,t)=1}  \mathcal{K}_{\gamma_{q,S}^{(k)}-\delta} (s|_m,t|_m)\ \mathrm{d} \mu_q^{(k)}(s) \mathrm{d} \mu_q^{(k)}(t) \\
&= &\sum_{p\ge 1} \liminf_{m\to \infty} \sum_{u,v\in \Sigma_m; \atop \mathbf{1}_p(u,v)=1} \mathcal{K}_{\gamma_{q,S}^{(k)}-\delta} (u,v) \cdot \mu_q^{(k)}([u]\cap E_n^{(k)}(q,\epsilon)) \mu_q^{(k)}([v]\cap E_n^{(k)}(q,\epsilon))\\
&\le& \sum_{p\ge 1} \liminf_{m\to \infty} \sum_{u,v\in \Sigma_m; \atop \mathbf{1}_p(u,v)=1} \mathcal{K}_{\gamma_{\bar{q},S}^{(k)}-\delta-\epsilon} (u,v) \cdot \mu_q^{(k)}([u]\cap E_n^{(k)}(q,\epsilon)) \mu_q^{(k)}([v]\cap E_n^{(k)}(q,\epsilon)),
\end{eqnarray*}
where the last inequality comes from the fact that due to~\eqref{kernel} and~\eqref{Ulambda}, for any $\bar{q}\in K$, $\epsilon>0$ and $u,v\in \Sigma_*$, we have $\sup_{q\in U_\epsilon(\bar{q}) }  \mathcal{K}_{\gamma_{q,S}^{(k)}-\delta} (u,v) \le \mathcal{K}_{\gamma_{\bar{q},S}^{(k)}-\delta-\epsilon} (u,v)$.

Let 
$$
A_{p,m}=\sum_{u,v\in \Sigma_m; \atop \mathbf{1}_p(u,v)=1} \mathcal{K}_{\gamma_{\bar{q},S}^{(k)}-\delta-\epsilon} (u,v)\cdot \mu_q^{(k)}([u]\cap E_n^{(k)}(q,\epsilon)) \mu_q^{(k)}([v]\cap E_n^{(k)}(q,\epsilon)).
$$
Then,
\begin{eqnarray}
\sup_{q\in U_\epsilon(\bar{q}) }  \mathcal{I}^{S}_{n,\delta}(q,\epsilon) &\le& \sup_{q\in U_\epsilon(\bar{q}) } \sum_{p\ge 1} \liminf_{m\to \infty}\ A_{p,m}  \nonumber \\
&\le&  \sup_{q\in U_\epsilon(\bar{q}) } \sum_{p\ge 1} \Big( A_{p,m_p}+\sum_{m\ge m_p} |A_{p,m+1}-A_{p,m}|\Big) \nonumber \\ 
&\le&  \sum_{p\ge 1} \Big(  \sup_{q\in U_\epsilon(\bar{q}) }A_{p,m_p}+\sum_{m\ge m_p}  \sup_{q\in U_\epsilon(\bar{q}) }|A_{p,m+1}-A_{p,m}|\Big), \label{uniupper}
\end{eqnarray}
where for $p\ge 1$, we can choose $m_p\ge 2$ to be any integer. We have
\begin{equation}\label{IdeltaI}
\sup_{q\in U_\epsilon(\bar{q}) } A_{p,m} \le B_{p,m} \quad\text{and}\quad\sup_{q\in U_\epsilon(\bar{q}) }|A_{p,m+1}-A_{p,m}|\le \Delta B_{p,m},
\end{equation}
where
\begin{eqnarray*}
B_{p,m}
&=& \sum_{u,v\in \Sigma_m;\atop \mathbf{1}_p(u,v)=1} \mathcal{K}_{\gamma_{\bar{q},S}^{(k)}-\delta-\epsilon} (u,v) \sup_{q\in U_\epsilon(\bar{q}) } \mu_q^{(k)}([u]\cap E_n^{(k)}(q,\epsilon)) \mu_q^{(k)}([v]\cap E_n^{(k)}(q,\epsilon)),\\
\Delta B_{p,m}&=&\sum_{u,v\in \Sigma_m,u',v'\in \{0,1\}; \atop \mathbf{1}_p(u,v)=1} \big|\mathcal{K}_{\gamma_{\bar{q},S}^{(k)}-\delta-\epsilon} (uu',vv')-\mathcal{K}_{\gamma_{\bar{q},S}^{(k)}-\delta-\epsilon} (u,v)\big|\cdot \\
&&\qquad\qquad\qquad\qquad \sup_{q\in U_\epsilon(\bar{q}) } \mu_q^{(k)}([uu']\cap E_n^{(k)}(q,\epsilon)) \mu_q^{(k)}([vv']\cap E_n^{(k)}(q,\epsilon)),
\end{eqnarray*}
and we have used the equality $\mu_q^{(k)}([u]\cap E_n^{(k)}(q,\epsilon))=\sum_{u'\in\{0,1\}} \mu_q^{(k)}([uu']\cap E_n^{(k)}(q,\epsilon))$ to get the second inequality.

\begin{remark}{\rm \label{J12} $\ $
For technical reasons, we need to divide $J_k$ into two parts, in which $K$ will be chosen: 
\[
J_k'=\{q\in J_k : \gamma_{q,G}^{(k)}>1\} \text{ and } J_k''=\{q\in J_k : \gamma_{q,G}^{(k)} \le 1\}.
\]
Then, due to \eqref{gammaGR}, we have 
\begin{equation*}
\gamma_{q,G}^{(k)}=\left\{
\begin{array}{ll}
D^{(k)}_q+1-h^{(k)}_q & \textrm{if } q\in J_k',\\
\displaystyle D^{(k)}_q/h^{(k)}_q & \textrm{if } q\in J_k''
\end{array}
\right. \text{ and } 
\gamma_{q,R}^{(k)}=\left\{
\begin{array}{ll}
1& \textrm{if } q\in J_k',\\
\displaystyle D^{(k)}_q/h^{(k)}_q & \textrm{if } q\in J_k''.
\end{array}
\right.
\end{equation*}
}\end{remark}

For any compact subinterval $K$ of $J_k$ there exists $c_K>0$ such that for any $\epsilon<c_K$ and $q\in K$, $\gamma_{q,G}^{(k)}-\epsilon>1$ if $K\subset J_k'$ and $\gamma_{q,R}^{(k)}-\epsilon>0$ if $K\subset J_k''$.

Let $\delta_K=\epsilon_K=c_K/2$. We have the following key proposition:

\begin{proposition}\label{mainprop}
Let $S\in\{G,R\}$. Suppose that  $K$ is a compact subinterval of $J_k'$ or $J_k''$. For any $0<\delta<\delta_K$ we can find constants $c_1,c_2>0$, $\kappa_1,\kappa_2, \eta_1,\eta_2>0$ and $\epsilon_*>0$ such that for any $\bar{q}\in K$, $0<\epsilon\le \epsilon_*$, $n\ge 1$, $p\ge 1$, and $m\ge (p\vee n)+1+N_{\psi,k}+k$,
\begin{eqnarray*}
\mathbb{E}\left(B_{p,m}\right) &\le& c_1 \cdot C_{p\vee n} \cdot 2^n\cdot 2^{c_2\cdot ((p\vee n) -p)} \cdot 2^{-\eta_1\delta \cdot p+\kappa_1\epsilon\cdot m}; \\
\mathbb{E}\left(\Delta B_{p,m}\right) &\le& c_1 \cdot C_{p\vee n} \cdot 2^{c_2\cdot ((p\vee n) -p)} \cdot 2^{\kappa_2\cdot p-\eta_2\cdot m},
\end{eqnarray*}
where $C_{p\vee n}=\sup_{w\in\Sigma_{p\vee n+4+N_{\psi,k}}} \|f_w\|_\infty$, here $f_w$ is just the formal replacement of the bounded density function $f_{j,k}$ given in (A3).
\end{proposition}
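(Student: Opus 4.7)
The plan is to integrate out a single well-chosen random coefficient $\pi_{w^{*}}$, where $w^{*}=u|_{l^{*}}$ for a scale $l^{*}=(p\vee n)+O(1)$ dictated by~\eqref{gec}, in order to obtain a bounded conditional density for the increment $F_\mu^{\text{pert}}(\lambda(u))-F_\mu^{\text{pert}}(\lambda(v))$, then to integrate the Riesz-type kernel $\mathcal{K}_{\gamma}$ against this density. The crucial input is~\eqref{gec} combined with (A3): conditionally on $\{\pi_{w}\}_{w\ne w^{*}}$ the increment is an affine function of $\pi_{w^{*}}$ whose slope is bounded below in absolute value by $(c_{\psi,k}/2)|d_{w^{*}}|$, and hence has a density bounded by $2\|f_{w^{*}}\|_{\infty}/(c_{\psi,k}|d_{w^{*}}|)$.

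First I would perform a deterministic reduction. Since $\mathcal{K}_{\gamma}\ge 0$, the random indicator $\mathbf{1}^{(c)}$ may be dropped from $\mu_{q}^{(k)}([u]\cap E_{n}^{(k)}(q,\epsilon))$. On the event that $u\in \Sigma_{m}^{(k)}(q,\epsilon)$ for some $q\in U_{\epsilon}(\bar{q})$ (and analogously for $v$), the deterministic indicators $\mathbf{1}^{(a)}$, $\mathbf{1}^{(b)}$, the Gibbs bounds~\eqref{gibbs}, and the local Lipschitz continuity of $q\mapsto P_{X_{k}}(q\varphi)$ give, uniformly in $q'\in U_{\epsilon}(\bar{q})$,
\[
\mu_{q'}^{(k)}([u])\le C\cdot 2^{-m(D_{\bar{q}}^{(k)}-c\epsilon)} \quad\text{and}\quad |d_{u|_{l^{*}}}|\ge c\cdot 2^{-l^{*}(h_{\bar{q}}^{(k)}+c\epsilon)}.
\]
The conditional density bound then reads $B\le C_{k}\cdot C_{p\vee n}\cdot 2^{(p\vee n)(h_{\bar{q}}^{(k)}+c\epsilon)}$, the constants absorbing $c_{\psi,k}^{-1}$ and the bounded contribution of $1+N_{\psi,k}$ to $l^{*}$.

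Next comes the kernel estimate. For $\gamma=\gamma_{\bar{q},S}^{(k)}-\delta-\epsilon$, integrating against a density of height $B$ gives
\[
\mathbb{E}\bigl[\mathcal{K}_{\gamma}(u,v)\,\big|\,\{\pi_{w}\}_{w\ne w^{*}}\bigr]\le C\cdot B\cdot 2^{p(\gamma-1)_{+}}+1,
\]
where for $\gamma<1$ one uses $\int|x|^{-\gamma}\cdot B\,\mathrm{d}x+1\lesssim B+1$ near the origin, and for $\gamma>1$ one first integrates in the $F$-direction to obtain $\lesssim B|\lambda|^{1-\gamma}$, then uses $|\lambda(u)-\lambda(v)|\ge 2^{-p-1}$ from $\mathbf{1}_{p}(u,v)=1$. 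Substituting the formulae from Remark~\ref{J12} for $\gamma_{\bar{q},S}^{(k)}$ on $J_{k}'$ and on $J_{k}''$ converts these exponents into expressions in $D_{\bar{q}}^{(k)}$ and $h_{\bar{q}}^{(k)}$; the shift by $\delta$ produces the decay $2^{-\eta_{1}\delta p}$.

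Finally I would sum over $(u,v)$ with $\mathbf{1}_{p}(u,v)=1$. The deterministic count $\#\{u\in\Sigma_{m}: \mathbf{1}_{u}^{(a)}(q,\epsilon)\mathbf{1}_{u}^{(b)}(q,\epsilon)=1 \text{ for some } q\in U_{\epsilon}(\bar{q})\}\le C\cdot 2^{m(D_{\bar{q}}^{(k)}+c\epsilon)}$ follows from $\mu_{\bar{q}}^{(k)}([u])\ge c\cdot 2^{-m(D_{\bar{q}}^{(k)}+c\epsilon)}$ and $\sum_{u}\mu_{\bar{q}}^{(k)}([u])\le 1$; combined with the constraint $v|_{p+1}\in\mathcal{N}(u|_{p+1})$ (at most three choices) and the quasi-Bernoulli bound $\sum_{v\text{ extending }v|_{p+1}}\mu_{\bar{q}}^{(k)}([v])\le\mu_{\bar{q}}^{(k)}([v|_{p+1}])$, this collapses the double sum and, after arithmetic substitutions, yields the announced bound on $\mathbb{E}(B_{p,m})$. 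For $\mathbb{E}(\Delta B_{p,m})$ the same scheme applies, but the kernel increment $|\mathcal{K}_{\gamma}(uu',vv')-\mathcal{K}_{\gamma}(u,v)|$ is estimated via the mean value theorem together with the H\"older regularity of $F_\mu^{\text{pert}}$ on $E_{n}^{(k)}(q,\epsilon)$ encoded in $\mathbf{1}^{(c)}$ (with exponent $h_{\bar{q}}^{(k)}-\epsilon$), which supplies the extra decay $2^{-\eta_{2}m}$. The main technical obstacle will be coordinating the four scales $n,p,l^{*},m$ while preserving uniformity in $q\in U_{\epsilon}(\bar{q})$: the dichotomy $J_{k}=J_{k}'\sqcup J_{k}''$ of Remark~\ref{J12} must be respected throughout $K$ so that $\gamma_{\bar{q},S}^{(k)}-\delta-\epsilon$ lies strictly on one side of $1$, and the conditional integration of $\pi_{w^{*}}$ must commute with the deterministic bounds on $|d_{w^{*}}|$ and $\mu_{q}^{(k)}([u])$ without introducing spurious factors.
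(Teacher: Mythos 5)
Your overall strategy coincides with the paper's: condition on the single coefficient $\pi_{u|_{l}}$ with $l=(p\vee n)+1+N_{\psi,k}$, use \eqref{gec} to bound the slope $|A|\ge \tfrac{c_{\psi,k}}{2}|d_{u|_{l}}|$ from below, integrate the kernel against the resulting bounded density, and count admissible pairs via $\mathbf{1}^{(b)}$ and the mass of $\mu_{\bar q}^{(k)}$. However, there is a genuine gap in your kernel estimate for $\gamma<1$, and it originates in your very first reduction, where you announce that ``the random indicator $\mathbf{1}^{(c)}$ may be dropped.'' For $\mathbb{E}(B_{p,m})$ with $\gamma=\gamma^{(k)}_{\bar q,S}-\delta-\epsilon<1$ your bound reads $\mathbb{E}[\mathcal{K}_\gamma\mid\cdots]\lesssim B+1$ with $B\approx C_{p\vee n}\,2^{(p\vee n)(h^{(k)}_{\bar q}+2\epsilon)}$, i.e.\ you integrate $|z|^{-\gamma}$ over a range of order $1$. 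Multiplying by the pair count $\approx 3^{r-p}2^{2m(D^{(k)}_{\bar q}+2\epsilon)-r(D^{(k)}_{\bar q}-2\epsilon)}$ (with $r=p\vee n$) and the prefactor $2^{-2m(D^{(k)}_{\bar q}-2\epsilon)}$ gives, up to $O(\epsilon)$ terms in the exponent, a contribution of order $2^{r(h^{(k)}_{\bar q}-D^{(k)}_{\bar q})}$. On $J_k''$ one has $h^{(k)}_{\bar q}>D^{(k)}_{\bar q}$, so this grows geometrically in $r\ge p$ and the sum over $p$ diverges: no choice of $\eta_1,\kappa_1$ rescues the claimed bound $2^{-\eta_1\delta p+\kappa_1\epsilon m}$.

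The missing ingredient is precisely the indicator you discarded. The paper retains $\mathbf{1}^{(c)}_{u|_{r}}$ (together with $\mathbf{1}^{(c)}_u,\mathbf{1}^{(c)}_v$) inside a ``random'' indicator $\mathbf{1}^{\mathrm{ran}}_{u,v}$ that multiplies the kernel \emph{before} taking the conditional expectation; it forces $|F^{\text{pert}}_\mu(\lambda(u))-F^{\text{pert}}_\mu(\lambda(v))|\le \alpha:=2^{-\mathbf{1}_{\{p\ge n\}}r(h^{(k)}_{\bar q}-2\epsilon)}$, so the substitution $z=Ax+B$ is integrated only over $|z|\le\alpha$ and one gains the factor $\alpha^{1-\gamma}$. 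Since $h^{(k)}_{\bar q}(1-\gamma)=h^{(k)}_{\bar q}-D^{(k)}_{\bar q}+h^{(k)}_{\bar q}(\delta+\epsilon)$, this converts the bad exponent $r\,h^{(k)}_{\bar q}$ into $r(D^{(k)}_{\bar q}-h^{(k)}_{\bar q}(\delta+\epsilon))$, which cancels against the counting term and yields the decay $2^{-h^{(k)}_{\bar q}\delta\,p}$; the residual loss when $p<n$ (where $\alpha=1$) is exactly the factor $2^n$ appearing in the statement, which your sketch cannot produce. Your treatment of $\gamma>1$ (using $|\lambda(u)-\lambda(v)|\ge 2^{-p-1}$) and of $\Delta B_{p,m}$ is consistent with the paper, but the $B_{p,m}$ estimate on $J_k''$ must be repaired as above.
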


Now fix any $n\ge 1$, and choose $m_p=\frac{\kappa_2+\frac{1}{2}\delta\eta_1}{\eta_2}\cdot (p\vee n)$ (by modifying a little $\eta_2$ we can always assume that $\frac{\kappa_2+\frac{1}{2}\delta\eta_1}{\eta_2}  (p\vee n)>(p\vee n)+1+N_{\psi,k}+k$) and $\epsilon_\delta=\epsilon_* \wedge\frac{\frac{1}{2}\delta\eta_1\eta_2}{\kappa_1(\kappa_2+\frac{1}{2}\delta\eta_1)}$. Then by using Proposition~\ref{mainprop} and~\eqref{uniupper},~\eqref{IdeltaI}, for any $\delta<\delta_K$, $\bar{q}\in K$, $\epsilon<\epsilon_\delta$ and $S\in\{G,R\}$ we have
\begin{eqnarray*}
&&\mathbb{E}\Bigg(\sup_{q\in U_\epsilon(\bar{q}) }  \mathcal{I}^{S}_{n,\delta}(q,\epsilon)\Bigg)\\
&\le& \sum_{p\ge 1} \Bigg(  \mathbb{E}\left(B_{p,m_p}\right)+\sum_{m\ge m_p}  \mathbb{E}\left(\Delta B_{p,m}\right)\Bigg) \\
&\le& \sum_{p\ge 1} c_1\cdot C_{p\vee n}\cdot 2^{c_2\cdot ((p\vee n) -p)} \cdot\Bigg( 2^n\cdot 2^{-\eta_1\delta \cdot p+\kappa_1\epsilon\cdot m_p} + \sum_{m\ge m_p} 2^{\kappa_2\cdot p-\eta_2\cdot m} \Bigg)\\
&\le & c_1\cdot \sum_{p\ge 1} C_{p \vee n}\cdot  2^{c_2\cdot ((p\vee n) -p)} \cdot\Bigg(  2^n\cdot 2^{-\eta_1\delta \cdot p+\kappa_1\cdot \epsilon_\delta\cdot m_p} + 2^{\kappa_2\cdot p-\eta_2\cdot m_p}\cdot\frac{1}{1-2^{-\eta_2}} \Bigg)\\
&\le& \frac{c_1}{1-2^{-\eta_2}}\cdot \sum_{p\ge 1} C_{p \vee n}\cdot 2^{c_2\cdot ((p\vee n) -p)} \cdot 2^{n} \cdot\\
&&\qquad \qquad \qquad \qquad \Bigg(2^{-\eta_1\delta \cdot p+\kappa_1\cdot \frac{\frac{1}{2}\delta\eta_1\eta_2}{\kappa_1(\kappa_2+\frac{1}{2}\delta\eta_1)}\cdot \frac{\kappa_2+\frac{1}{2}\delta\eta_1}{\eta_2}\cdot (n\vee p)}+2^{\kappa_2\cdot p-\eta_2\cdot \frac{\kappa_2+\frac{1}{2}\delta\eta_1}{\eta_2}\cdot p} \Bigg)\\
&=& \frac{2c_1}{1-2^{-\eta_2}}\cdot \sum_{p\ge 1} C_{p \vee n}\cdot  2^{(c_2+\frac{1}{2}\eta_1\delta)\cdot ((p\vee n) -p)} \cdot 2^{n} \cdot 2^{-\frac{1}{2}\delta \eta_1p} \\
&=& \frac{2^{n+1}c_1}{1-2^{-\eta_2}}\cdot \Bigg( \Big(\sup_{w\in\Sigma_{n+4+N_{\psi,k}}} \|f_w\|_\infty\Big) \cdot\sum_{p=1}^n 2^{(c_2+\frac{1}{2}\eta_1\delta)\cdot (n-p)} \cdot 2^{-\frac{1}{2}\delta \eta_1p} + \\
&& \qquad \qquad 2^{\frac{1}{2}\delta \eta_1(4+N_{\psi,k})}\cdot \sum_{p= n+1}^{\infty} \Big(\sup_{w\in\Sigma_{p+4+N_{\psi,k}}} \|f_w\|_\infty\Big) \cdot 2^{-\frac{1}{2}\delta \eta_1(p+4+N_{\psi,k})} \Bigg)<\infty,
\end{eqnarray*}
where the finiteness is ensured by assumption (A3). Since for any $0<\epsilon<\epsilon_\delta$, the family $\{U_\epsilon(\bar{q})\}_{\bar{q}\in K}$ forms an open covering of $K$, there exist $\bar{q}_1,\cdots,\bar{q}_N$ such that $\{U_\epsilon(\bar{q}_i)\}_{1\le i\le N}$ also covers $K$. This gives us the conclusion.

\end{proof}

\subsection{Proof of Proposition~\ref{mainprop}}
\begin{proof}
Due to~\eqref{Ulambda} we always have
$$\bigcup_{q\in U_\epsilon(\bar{q}) } E_n^{(k)}(q,\epsilon) \subset E_n^{(k)}(\bar{q},2\epsilon).$$
Then due to~\eqref{indicatorb} we have
\begin{eqnarray*}
&&\sup_{q\in U_\epsilon(\bar{q}) } \mu_q^{(k)}([u]\cap E_n^{(k)}(q,\epsilon)) \mu_q^{(k)}([v]\cap E_n^{(k)}(q,\epsilon))\\
&\le&\sup_{q\in U_\epsilon(\bar{q}) } \mathbf{1}_{\left\{[u]\cap E_n^{(k)}(q,\epsilon)\neq\emptyset\right\}} 2^{-|u|(D^{(k)}_q-\epsilon)} \cdot \mathbf{1}_{\left\{[v]\cap E_n^{(k)}(q,\epsilon)\neq\emptyset\right\}} 2^{-|v|(D^{(k)}_q-\epsilon)} \\
&\le& \mathbf{1}_{\left\{[u]\cap E_n^{(k)}(\bar{q},2\epsilon)\neq\emptyset\right\}} \cdot \mathbf{1}_{\left\{[v]\cap E_n^{(k)}(\bar{q},2\epsilon)\neq\emptyset\right\}} \cdot 2^{-(|u|+|v|)(D^{(k)}_{\bar{q}}-2\epsilon)}.
\end{eqnarray*}
This gives us 
\begin{multline}
B_{p,m} \le 2^{-2m(D^{(k)}_{\bar{q}}-2\epsilon)} \sum_{u,v\in \Sigma_m;\atop \mathbf{1}_p(u,v)=1} \mathcal{K}_{\gamma_{\bar{q},S}^{(k)}-\delta-\epsilon} (u,v) \cdot
\mathbf{1}_{\left\{[u]\cap E_n^{(k)}(\bar{q},2\epsilon)\neq\emptyset\right\}} \mathbf{1}_{\left\{[v]\cap E_n^{(k)}(\bar{q},2\epsilon)\neq\emptyset\right\}}, \label{S}
\end{multline}
\begin{multline}
\Delta B_{p,m} \le 2^{-2(m+1)(D^{(k)}_{\bar{q}}-2\epsilon)}\cdot \sum_{u,v\in \Sigma_m,u',v'\in\{0,1\}; \mathbf{1}_p(u,v)=1}  \\
\big| \mathcal{K}_{\gamma_{\bar{q},S}^{(k)}-\delta-\epsilon} (uu',vv')- \mathcal{K}_{\gamma_{\bar{q},S}^{(k)}-\delta-\epsilon} (u,v)\big|\cdot \mathbf{1}_{\left\{[uu']\cap E_n^{(k)}(\bar{q},2\epsilon)\neq\emptyset\right\}} \mathbf{1}_{\left\{[vv']\cap E_n^{(k)}(\bar{q},2\epsilon)\neq\emptyset\right\}}. \label{T}
\end{multline}

Now we deal with each term of the above sums individually.

Fix $p$ and $n$ in $\mathbb{N}^*$, let $r=p\vee n$, and fix $m \ge r+1+N_{\psi,k}+ k$. 

Fix a pair $u,v\in\Sigma_m$ with $\mathbf{1}_p(u,v)=1$, so $(u|_{p+1},v|_{p+1})\in \mathcal{P}_p$. 

Let
\begin{equation*}
\left\{
\begin{array}{l}
V:=\mathcal{K}_{\gamma_{\bar{q},S}^{(k)}-\delta-\epsilon} (u,v) \cdot
\mathbf{1}_{\left\{[u]\cap E_n^{(k)}(\bar{q},2\epsilon)\neq\emptyset\right\}} \mathbf{1}_{\left\{[v]\cap E_n^{(k)}(\bar{q},2\epsilon)\neq\emptyset\right\}}; \\
\Delta V:= {\scriptstyle \big| \mathcal{K}_{\gamma_{\bar{q},S}^{(k)}-\delta-\epsilon} (uu',vv')- \mathcal{K}_{\gamma_{\bar{q},S}^{(k)}-\delta-\epsilon} (u,v)\big|} \cdot \mathbf{1}_{\left\{[uu']\cap E_n^{(k)}(\bar{q},2\epsilon)\neq\emptyset\right\}} \mathbf{1}_{\left\{[vv']\cap E_n^{(k)}(\bar{q},2\epsilon)\neq\emptyset\right\}}.
\end{array}
\right.
\end{equation*}

Due to~\eqref{indicatora},~\eqref{indicatorb} and~\eqref{indicatorc}, if $[u]\cap E_n^{(k)}(\bar{q},2\epsilon)\neq\emptyset$, then for $l=r,\cdots,m$ we have
$$\mathbf{1}^{(a)}_{u|_l}(\bar{q},2\epsilon)\cdot \mathbf{1}^{(b)}_{u|_l}(\bar{q},2\epsilon)\cdot \mathbf{1}^{(c)}_{u|_l}(\bar{q},2\epsilon)=1.$$

Define
\begin{eqnarray}
\mathbf{1}_{u,v}^{\text{ran}}(\bar q,\epsilon) &=& \mathbf{1}^{(a)}_{u|_{l}}(\bar q,2\epsilon)\cdot  \mathbf{1}^{(c)}_{u|_r}(\bar q,2\epsilon)\cdot \mathbf{1}^{(c)}_{u}(\bar q,2\epsilon)\cdot \mathbf{1}^{(c)}_{v}(\bar q,2\epsilon); \label{ran}\\
\mathbf{1}_{u,v}^{\text{det}}(\bar q,\epsilon) &=& \mathbf{1}^{(b)}_{u|_r}(\bar q,2\epsilon)\cdot \mathbf{1}^{(b)}_{v|_r}(\bar q,2\epsilon)\cdot \mathbf{1}^{(b)}_{u}(\bar q,2\epsilon)\cdot \mathbf{1}^{(b)}_{v}(\bar q,2\epsilon), \label{det}
\end{eqnarray}
where ``$\text{ran}$" stands for random and ``$\text{det}$" stands for deterministic.

Since $[uu']\cap E_n^{(k)}(\bar q,2\epsilon)\neq\emptyset$ implies $[u]\cap E_n^{(k)}(\bar q,2\epsilon)\neq\emptyset$, we have
\begin{eqnarray*}
\mathbf{1}_{\left\{[uu']\cap E_n^{(k)}(\bar q,2\epsilon)\neq\emptyset\right\}} \cdot \mathbf{1}_{\left\{[vv']\cap E_n^{(k)}(\bar q,2\epsilon)\neq\emptyset\right\}} &\le& \mathbf{1}_{\left\{[u]\cap E_n^{(k)}(\bar q,2\epsilon)\neq\emptyset\right\}}\cdot\mathbf{1}_{\left\{[v]\cap E_n^{(k)}(\bar q,2\epsilon)\neq\emptyset\right\}} \\
&\le& \mathbf{1}_{u,v}^{\text{ran}}(\bar q,\epsilon)\cdot \mathbf{1}_{u,v}^{\text{det}}(\bar q,\epsilon).
\end{eqnarray*}
This implies
\begin{equation*}
V \le \bar{\mathcal{K}}_{\gamma_{\bar{q},S}^{(k)}-\delta-\epsilon} (u,v) \cdot \mathbf{1}^{\text{det}}_{u,v}(\bar q,\epsilon) \ \text{ and }\  \Delta V \le \Delta \bar{\mathcal{K}}_{\gamma_{\bar{q},S}^{(k)}-\delta-\epsilon} (u,v) \cdot \mathbf{1}^{\text{det}}_{u,v}(\bar q,\epsilon),
\end{equation*}
where
\begin{equation}\label{kdeltak}
\left\{
\begin{array}{l}
\bar{\mathcal{K}}_{\gamma_{\bar{q},S}^{(k)}-\delta-\epsilon} (u,v)=\mathcal{K}_{\gamma_{\bar{q},S}^{(k)}-\delta-\epsilon} (u,v)\cdot  \mathbf{1}^{\text{ran}}_{u,v}(\bar q,\epsilon); \\
\ \\
\Delta \bar{\mathcal{K}}_{\gamma_{\bar{q},S}^{(k)}-\delta-\epsilon} (u,v)=\big| \mathcal{K}_{\gamma_{\bar{q},S}^{(k)}-\delta-\epsilon} (uu',vv')- \mathcal{K}_{\gamma_{\bar{q},S}^{(k)}-\delta-\epsilon} (u,v)\big| \cdot  \mathbf{1}^{\text{ran}}_{u,v}(\bar q,\epsilon).
\end{array}
\right.
\end{equation}

Since $\mathbf{1}^{\text{det}}_{u,v}(\bar q,\epsilon)$ is deterministic, we have
\begin{eqnarray*}
\mathbb{E}(V) &\le& \mathbb{E}\Big( \bar{\mathcal{K}}_{\gamma_{\bar{q},S}^{(k)}-\delta-\epsilon}(u,v) \Big) \cdot \mathbf{1}^{\text{det}}_{u,v}(\bar q,\epsilon), \\
\mathbb{E}(\Delta V) &\le& \mathbb{E}\Big( \Delta \bar{\mathcal{K}}_{\gamma_{\bar{q},S}^{(k)}-\delta-\epsilon} (u,v) \Big) \cdot \mathbf{1}^{\text{det}}_{u,v}(\bar{q},\epsilon).
\end{eqnarray*}

Recall that in Remark~\ref{J12} we distinguished the cases $K\subset J_k'$ and $K\subset J_k''$ according to whether or not the corresponding power on the kernel is greater than $1$. Then, due to~\eqref{kernel}, once we have taken $\delta<\delta_K$ and $\epsilon<\epsilon_K$, only two situations are left:
\begin{equation*}
\mathcal{K}_{\gamma} (u,v)=\left\{
\begin{array}{ll}
\left(\big|F^{\text{pert}}_{\mu}(\lambda(u))-F^{\text{pert}}_{\mu}(\lambda(v))\big|^2+|\lambda(u)-\lambda(v)|^2\right)^{\frac{\gamma}{2}} \vee 1, & \text{if } \gamma>1,\\
\ \\
\big|F^{\text{pert}}_{\mu}(\lambda(u))-F^{\text{pert}}_{\mu}(\lambda(v))\big|^{\gamma} \vee 1, & \text{if } \gamma < 1,
\end{array}
\right.
\end{equation*}
where $\gamma=\gamma_{\bar{q},S}^{(k)}-\delta-\epsilon$. {\it Notice that when we take $K$ a compact subinterval of $J'_k$ or $J''_k$, $\gamma$ could never be equal to $1$. }

\medskip

Recall that $C_r=\sup_{w\in\Sigma_{r+4+N_{\psi,k}}} \|f_w\|_\infty$, where $f_w$ is bounded density function of $\pi_w$ given in (A3). We have the following two lemmas:

\begin{lemma}\label{kernelcondi}
There exists a constant $c_\gamma>0$ such that
\begin{eqnarray*}
\mathbb{E}\Big( \bar{\mathcal{K}}_{\gamma} (u,v) \Big) &\le& c_\gamma \cdot C_r\cdot \left\{
\begin{array}{ll}
2^{r\cdot (h^{(k)}_{\bar{q}}+2\epsilon)-p(1-\gamma)}, & \text{ if } \gamma>1, \\
2^{n} \cdot 2^{r\cdot (h^{(k)}_{\bar{q}}\gamma+4\epsilon)}, & \text{ if } \gamma<1,
\end{array}
\right.\\
\mathbb{E}\Big( \Delta \bar{\mathcal{K}}_{\gamma} (u,v) \Big) &\le& c_\gamma \cdot C_r \cdot  \left\{
\begin{array}{ll}
2^{p\cdot 3-m\cdot (h^{(k)}_{\bar{q}}-2\epsilon)}, & \text{ if } \gamma>1, \\
2^{r\cdot 3-m\cdot (h^{(k)}_{\bar{q}}-2\epsilon)}, & \text{ if } \gamma<1.
\end{array}
\right.
\end{eqnarray*}
\end{lemma}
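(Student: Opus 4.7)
The plan is to isolate, in each evaluation of $F_\mu^{\text{pert}}$ appearing in the kernel, a single wavelet coefficient whose random multiplier $\pi_{w^*}$ has bounded density by assumption (A3), and to use this bounded density to integrate the Riesz-like kernel conditionally on the other $\pi_w$'s. Fix $u,v \in \Sigma_m$ with $\mathbf{1}_p(u,v)=1$, and let $w^* := u|_l$ for a level $l$ in the range $p+1+N_{\psi,k} \leq l \leq r+4+N_{\psi,k}$ (this is the $l$ appearing in the factor $\mathbf{1}^{(a)}_{u|_l}$ of $\mathbf{1}^{\text{ran}}$; such an $l$ exists since $r=p\vee n$ and $m \geq r+1+N_{\psi,k}+k$). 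Because $u|_{p+1}\neq v|_{p+1}$, $w^*$ is a prefix of $u$ but not of $v$, so the only term in $F_\mu^{\text{pert}}(\lambda(u))-F_\mu^{\text{pert}}(\lambda(v))$ involving $\pi_{w^*}$ is $\pi_{w^*}\,d_{w^*}\,[\psi_{w^*}(\lambda(u))-\psi_{w^*}(\lambda(v))]$. Writing this as $A\pi_{w^*}+B$ with $A,B$ measurable w.r.t.\ $\{\pi_w\}_{w\neq w^*}$, conditional on the other multipliers, the density $g$ of $Z:=A\pi_{w^*}+B$ satisfies $\|g\|_\infty \leq \|f_{w^*}\|_\infty/|A| \leq C_r/|A|$.

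The key lower bound on $|A|$ combines two ingredients. First, $T_{w^*}^{-1}(\lambda(u))=\lambda(\sigma^l u)\in[0,1]$ and $u\in X_k$, so \eqref{cpsi} together with continuity of $\psi$ gives $|\psi_{w^*}(\lambda(u))|\geq c_{\psi,k}$. Second, $|T_{w^*}^{-1}(\lambda(v))-T_{w^*}^{-1}(\lambda(u))|=2^l|\lambda(u)-\lambda(v)|\geq 2^{l-p-1}\geq 2^{3+N_{\psi,k}}$, so a sufficiently large uniform choice of $N_{\psi,k}$ combined with the decay of $\psi$ at infinity forces $|\psi_{w^*}(\lambda(v))|\leq c_{\psi,k}/2$, whence $|\psi_{w^*}(\lambda(u))-\psi_{w^*}(\lambda(v))|\geq c_{\psi,k}/2$. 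Combined with $|d_{w^*}|\geq 2^{-l(h^{(k)}_{\bar q}+2\epsilon)}$ enforced by $\mathbf{1}^{(a)}_{u|_l}$, this yields $\|g\|_\infty \lesssim C_r\,2^{r(h^{(k)}_{\bar q}+2\epsilon)}$ (the constant shift $l-r\leq 4+N_{\psi,k}$ being absorbed into constants).

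One then integrates the kernel against the conditional law of $Z$. For $\gamma>1$ (the regime $K\subset J_k'$), with $d=|\lambda(u)-\lambda(v)|\geq 2^{-(p+1)}$, $\int_{\mathbb{R}}(z^2+d^2)^{-\gamma/2}\,dz = c_\gamma\,d^{-(\gamma-1)}\leq c_\gamma\,2^{p(\gamma-1)}$ is finite thanks to the $d^2$ regularizer, and $\{z^2+d^2\geq 1\}$ contributes at most $1$; multiplying by $\|g\|_\infty$ produces the first bound. For $\gamma<1$ (the regime $K\subset J_k''$), the indicator $\mathbf{1}^{(c)}_{u|_r}$ in $\mathbf{1}^{\text{ran}}$ gives $|Z|\leq 2^{-r(h^{(k)}_{\bar q}-2\epsilon)}$ whenever $v|_r\in\mathcal{N}(u|_r)$ (automatic when $r=p$), and then $\int_{|z|\leq 2^{-r(h^{(k)}_{\bar q}-2\epsilon)}}|z|^{-\gamma}\,dz$ multiplied by $\|g\|_\infty$ and a short exponent computation gives $c_\gamma C_r\,2^{r(h^{(k)}_{\bar q}\gamma + 4\epsilon)}$. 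When $r=n>p$ and $v|_r$ may leave $\mathcal{N}(u|_r)$, the direct bound on $|Z|$ from $\mathbf{1}^{(c)}_{u|_r}$ is lost; one falls back on a uniform bound (e.g.\ $|Z|\leq \|F_\mu^{\text{pert}}\|_\infty$) and uses $h^{(k)}_{\bar q}(1-\gamma)<1$ on $J_k''$, which is precisely the origin of the extra factor $2^n$.

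For $\Delta\bar{\mathcal{K}}_\gamma$, a mean-value argument on $(x,y)\mapsto(x^2+y^2)^{-\gamma/2}\vee 1$ (resp.\ $x\mapsto|x|^{-\gamma}\vee 1$), whose gradient has size $\lesssim (x^2+y^2)^{-(\gamma+1)/2}$ (resp.\ $|x|^{-\gamma-1}$), combined with the increments $|F_\mu^{\text{pert}}(\lambda(uu'))-F_\mu^{\text{pert}}(\lambda(u))|\leq 2^{-m(h^{(k)}_{\bar q}-2\epsilon)}$ from $\mathbf{1}^{(c)}_u$ (since $uu'\in\mathcal{N}(u)$) and $|\lambda(uu')-\lambda(u)|\leq 2^{-(m+1)}$, reduces the estimate to one of the same form as $\bar{\mathcal{K}}_\gamma$ with $\gamma$ replaced by $\gamma+1$ and an outer factor $2^{-m(h^{(k)}_{\bar q}-2\epsilon)}$; repeating the density-bound integration delivers the claimed bounds. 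The principal technical obstacle is the careful bookkeeping across the three scales $p\leq l\leq r\leq m$, and in particular cleanly handling the $\gamma<1$, $r=n>p$ regime so as to extract the factor $2^n$; a secondary difficulty is the uniform choice of $N_{\psi,k}$ guaranteeing $|\psi_{w^*}(\lambda(v))|\leq c_{\psi,k}/2$ for all admissible $u,v$, which follows from the decay of $\psi$ at infinity combined with \eqref{cpsi}.
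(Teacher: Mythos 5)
Your overall architecture is exactly the paper's: isolate the single multiplier $\pi_{u|_l}$ at a level $l$ just beyond $r+N_{\psi,k}$, write $F^{\text{pert}}_\mu(\lambda(u))-F^{\text{pert}}_\mu(\lambda(v))=A\pi_{u|_l}+B$ with $A$ deterministic and $B$ independent of $\pi_{u|_l}$, lower-bound $|A|$ via the zero-avoidance property \eqref{cpsi} together with the decay of $\psi$ (this is precisely \eqref{gec}) and the indicator $\mathbf{1}^{(a)}_{u|_l}$, and then integrate the kernel against the conditional density of $A\pi_{u|_l}+B$. Your treatment of $\mathbb{E}(\bar{\mathcal K}_\gamma)$ in both regimes, including the identification of the $2^n$ factor as coming from the case $r=n>p$ where $\mathbf{1}^{(c)}_{u|_r}$ no longer controls $|F^{\text{pert}}_\mu(\lambda(u))-F^{\text{pert}}_\mu(\lambda(v))|$, matches the paper.

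There is, however, a genuine gap in your estimate of $\mathbb{E}(\Delta\bar{\mathcal K}_\gamma)$ when $\gamma<1$. You propose a mean-value bound with gradient of size $|x|^{-\gamma-1}$ followed by ``repeating the density-bound integration.'' Taken literally this fails: the exponent $\gamma+1$ exceeds $1$, so $\int_{|z|\le c}|z|^{-(\gamma+1)}\,\mathrm{d}z=\infty$, and integrating the mean-value majorant against the bounded density of $Z=A\pi_{u|_l}+B$ near $Z=0$ diverges. The difficulty is that $Z$ and $Z'=\eta Z+D$ can both be arbitrarily small, where the derivative of $|x|^{-\gamma}$ blows up non-integrably. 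The paper circumvents this by not using the mean value theorem at all in this case: after the change of variables $Ax+B=Dz$ the exact difference becomes $|A|^{-1}|D|^{1-\gamma}\int\big|\,|\eta z+1|^{-\gamma}-|z|^{-\gamma}\big|f_{u|_l}(\cdot)\,\mathrm{d}z$, and the point is that $\int_{\mathbb{R}}\big|\,|\eta z+1|^{-\gamma}-|z|^{-\gamma}\big|\,\mathrm{d}z$ is finite for $0<\gamma<1$ (integrable singularities at $z=0$ and $z=-1/\eta$, decay like $|z|^{-\gamma-1}$ at infinity), with $\eta$ uniformly close to $1$ by \eqref{eta-1}. An alternative repair within your framework is to split on $\{|Z|\ge 2\rho\}$ versus $\{|Z|<2\rho\}$, where $\rho$ is the deterministic bound on $|Z'-Z|\le|\eta-1|\,|Z|+|D|$ coming from \eqref{eta-1} and \eqref{D}: on the first event the mean-value argument is legitimate since the intermediate point is $\ge|Z|/2$, and on the second one bounds $|Z'|^{-\gamma}+|Z|^{-\gamma}$ directly using the bounded densities of $Z$ and $Z'$; either way one lands on $\|g\|_\infty\rho^{1-\gamma}$, which is the paper's $|A|^{-1}|D|^{1-\gamma}\|f_{u|_l}\|_\infty$ up to constants. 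Without one of these two devices the $\gamma<1$ increment bound does not close. (A secondary, non-fatal remark: for $\gamma>1$ the increment bound is obtained purely deterministically from $|\lambda(u)-\lambda(v)|\ge 2^{-p-1}$, with no density integration, which is what produces the clean $2^{3p}$ factor independent of $r$.)
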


\begin{lemma}\label{counting}
\[
\sum_{u,v\in \Sigma_m}  \mathbf{1}_p(u,v)\cdot \mathbf{1}^{\mathrm{det}}_{u,v}(\bar q,\epsilon) \le 3^{r-p+1} \cdot 2^{2m\cdot (D^{(k)}_{\bar q}+2\epsilon)-r\cdot (D^{(k)}_{\bar q}-2\epsilon)}.
\]
\end{lemma}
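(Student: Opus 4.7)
The plan is to bound the sum by a layered counting: fix $u\in\Sigma_m$ first, and then, for each such $u$, count the admissible $v$'s along the chain $v|_{p+1}\rightsquigarrow v|_r\rightsquigarrow v$. The indicator $\mathbf{1}^{\mathrm{det}}_{u,v}$ supplies four measure constraints, and the art is to use each at the right level.

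First, I would observe that $\mathbf{1}^{(b)}_u(\bar q,2\epsilon)=1$ forces $\mu^{(k)}_{\bar q}([u])\ge 2^{-m(D^{(k)}_{\bar q}+2\epsilon)}$; since the cylinders $\{[u]:u\in\Sigma_m\}$ are disjoint and have total $\mu^{(k)}_{\bar q}$-mass at most $1$, this gives $\#\{u\in\Sigma_m:\mathbf{1}^{(b)}_u=1\}\le 2^{m(D^{(k)}_{\bar q}+2\epsilon)}$. The factor $\mathbf{1}^{(b)}_{u|_r}$ is treated merely as a consistency condition at this stage — it does not sharpen Step 1. Next, for fixed $u$, I would enumerate the admissible $v|_{p+1}$: the definition of $\mathcal{P}_p$ forces $v|_p\in\mathcal{N}(u|_p)$ (at most three words), each extendable by $v_{p+1}\in\{0,1\}$; a short case-check shows that all three elements of $\mathcal{N}(u|_{p+1})$ appear among these six candidates, so the condition $v|_{p+1}\notin\mathcal{N}(u|_{p+1})$ leaves at most three admissible $v|_{p+1}$.

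For each such $v|_{p+1}$, when $r>p$ I would bound the number of $v|_r\in\Sigma_r$ extending it by the \emph{trivial} count $2^{r-p-1}$ (the case $r=p$ being vacuous). Then for each $v|_r$, I would use $\mathbf{1}^{(b)}_{v|_r}=1$ — only now — to write $\mu^{(k)}_{\bar q}([v|_r])\le 2^{-r(D^{(k)}_{\bar q}-2\epsilon)}$, and apply the Step 1 mass-counting argument to the cylinder $[v|_r]$ in place of the whole space: the number of $v\in\Sigma_m$ extending $v|_r$ with $\mathbf{1}^{(b)}_v=1$ is at most $\mu^{(k)}_{\bar q}([v|_r])\cdot 2^{m(D^{(k)}_{\bar q}+2\epsilon)}\le 2^{m(D^{(k)}_{\bar q}+2\epsilon)-r(D^{(k)}_{\bar q}-2\epsilon)}$. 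Multiplying these factors, the number of admissible $v$'s for fixed $u$ is bounded by $3\cdot 2^{r-p}\cdot 2^{m(D^{(k)}_{\bar q}+2\epsilon)-r(D^{(k)}_{\bar q}-2\epsilon)}$, uniformly in both cases $r=p$ and $r>p$. Combining with Step 1 and invoking $2\le 3$, so that $3\cdot 2^{r-p}\le 3^{r-p+1}$, yields the claimed inequality.

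The main technical subtlety — and the point at which an unwary estimate would lose the bound — is \emph{where} to cash in the measure constraints. It is tempting to use $\mathbf{1}^{(b)}_{v|_r}=1$ already at the intermediate level $r$ to bound $\#\{v|_r\text{ extending }v|_{p+1}:\mathbf{1}^{(b)}_{v|_r}=1\}$ by $\mu^{(k)}_{\bar q}([v|_{p+1}])\cdot 2^{r(D^{(k)}_{\bar q}+2\epsilon)}$, but this squanders the constraint at an intermediate level and introduces a spurious factor $2^{4r\epsilon}$ into the final estimate, which then cannot be absorbed by $3^{r-p+1}$. Keeping the intermediate count purely combinatorial ($2^{r-p-1}$) and reserving the measure input for the terminal step from $v|_r$ to $v$ is what produces the tight exponent $-r(D^{(k)}_{\bar q}-2\epsilon)$; this balancing is the only non-routine ingredient in the proof.
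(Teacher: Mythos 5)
Your proof is correct and follows essentially the same route as the paper's: both cash in $\mathbf{1}^{(b)}_w=1$ as a lower mass bound at level $m$ and as the upper bound $\mu^{(k)}_{\bar q}([v|_r])\le 2^{-r(D^{(k)}_{\bar q}-2\epsilon)}$ at level $r$, and both use the same $3\cdot 2^{r-p}\le 3^{r-p+1}$ combinatorial count coming from $v|_p\in\mathcal{N}(u|_p)$. The only difference is presentational — you count admissible words directly, while the paper converts the indicators into measures and sums them to $1$ at the end — and the exponents come out identical.
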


Now, due to Remark~\ref{J12}, we have the following three expression of $\gamma$:
\begin{equation*}
\left\{
\begin{array}{lll}
\gamma=D^{(k)}_{\bar q}+1-h^{(k)}_{\bar{q}} -\delta-\epsilon>1, & h^{(k)}_{\bar{q}}< D^{(k)}_{\bar q}, & \text{case (i)}, \\
\gamma=D^{(k)}_{\bar q}/h^{(k)}_{\bar{q}} -\delta-\epsilon <1, & h^{(k)}_{\bar{q}}> D^{(k)}_{\bar q}, & \text{case (ii)}, \\
\gamma=1-\delta-\epsilon <1, & h^{(k)}_{\bar{q}}< D^{(k)}_{\bar q}, & \text{case (iii)}.
\end{array}
\right.
\end{equation*}
Then, due to~\eqref{S},~\eqref{T}, Lemma~\ref{kernelcondi} and Lemma~\ref{counting}, we have
\begin{eqnarray*}
\mathbb{E}\left(B_{p,m}\right)&\le& c_\gamma \cdot C_r\cdot 3^{r-p+1} \cdot 2^{-2m(D^{(k)}_{\bar{q}}-2\epsilon)} \cdot 2^{2m(D^{(k)}_{\bar q}+2\epsilon)-r(D^{(k)}_{\bar q}-2\epsilon)} \\
&&\qquad\qquad\qquad \qquad\qquad\qquad\cdot \left\{
\begin{array}{ll}
2^{r(h^{(k)}_{\bar{q}}+2\epsilon)-p(h^{(k)}_{\bar{q}}-D^{(k)}_{\bar{q}}+\delta+\epsilon)}, & \text{(i)} \\
2^{n} \cdot 2^{r(D^{(k)}_{\bar{q}}-h^{(k)}_{\bar{q}}(\delta+\epsilon)+4\epsilon)}, & \text{(ii)}\\
2^{n} \cdot 2^{r(h^{(k)}_{\bar{q}}-h^{(k)}_{\bar{q}}(\delta+\epsilon)+4\epsilon)},  & \text{(iii)}
\end{array}
\right.\\
&=& c_\gamma \cdot C_r\cdot 3^{r-p+1} \cdot 2^{8m\epsilon}\cdot 2^{-(r-p)(D^{(k)}_{\bar q}-2\epsilon)} \cdot 2^{-p(D^{(k)}_{\bar q}-2\epsilon)}\\
&&\qquad\qquad \cdot \left\{
\begin{array}{ll}
2^{(r-p)(h^{(k)}_{\bar{q}}+2\epsilon)}\cdot 2^{p(D^{(k)}_{\bar{q}}-\delta+\epsilon)}, & \text{(i)} \\
2^{n} \cdot 2^{(r-p)(D^{(k)}_{\bar{q}}-h^{(k)}_{\bar{q}}(\delta+\epsilon)+4\epsilon)}\cdot 2^{p(D^{(k)}_{\bar{q}}-h^{(k)}_{\bar{q}}(\delta+\epsilon)+4\epsilon)}, & \text{(ii)}\\
2^{n} \cdot 2^{(r-p)(h^{(k)}_{\bar{q}}-h^{(k)}_{\bar{q}}(\delta+\epsilon)+4\epsilon)}\cdot 2^{p(h^{(k)}_{\bar{q}}-h^{(k)}_{\bar{q}}(\delta+\epsilon)+4\epsilon)},  & \text{(iii)}
\end{array}
\right.\\
&\le& c_\gamma \cdot C_r\cdot 3 \cdot 2^{(r-p)(\log_2 3+2)} \cdot 2^n\cdot  \left\{
\begin{array}{ll}
2^{-(\delta-3\epsilon)\cdot p+8\epsilon\cdot m}, & \text{(i)} \\
2^{-(h^{(k)}_{\bar{q}}(\delta+\epsilon)-6\epsilon)\cdot p+8\epsilon\cdot m}, & \text{(ii)}\\
2^{-(h^{(k)}_{\bar{q}}(\delta+\epsilon)-6\epsilon)\cdot p+8\epsilon\cdot m},  & \text{(iii)}
\end{array}
\right.\\
\end{eqnarray*}

The upper bound of $\mathbb{E}\left(\Delta B_{p,m}\right)$ is simpler, in all cases we have
\begin{eqnarray*}
\mathbb{E}\left(\Delta B_{p,m}\right) &\le& c_\gamma \cdot C_r\cdot 3^{r-p+1}\cdot 4 \cdot 2^{-2(m+1)(D^{(k)}_{\bar{q}}-2\epsilon)} \cdot \ 2^{2m(D^{(k)}_{\bar q}+2\epsilon)-r(D^{(k)}_{\bar q}-2\epsilon)}\\
&&\ \qquad\qquad\qquad\qquad\qquad\qquad \cdot 2^{(r-p)\cdot 3}\cdot 2^{p\cdot 3-m\cdot (h^{(k)}_{\bar{q}}-2\epsilon)} \\
&\le& c_\gamma \cdot C_r \cdot 3\cdot 2^{(r-p)(\log_23 +3)} \cdot 2^{p\cdot 3-m\cdot (h^{(k)}_{\bar{q}}-10\epsilon)}.
\end{eqnarray*}

{\it Notice that by construction we always have $h^{(k)}_{\bar q}\ge s_0-1/p_0>0$, then the existences of the parameters $c_1,c_2>0$, $\kappa_1,\kappa_2, \eta_1,\eta_2>0$ and $\epsilon_*>0$ are direct consequences of what we have obtained.}
\end{proof}

\subsection{Proof of Lemma~\ref{kernelcondi}.}\label{proofpropker}

\begin{proof}
Let $l=r+1+N_{\psi,k}$. Due to~\eqref{Fpert}, we have
\begin{eqnarray}
F^{\text{pert}}_{\mu}(\lambda(u))-F^{\text{pert}}_{\mu}(\lambda(v))&=&\sum_{w\in\Sigma_*} \pi_w\cdot d_w\cdot (\psi_w(\lambda(u))-\psi_w(\lambda(v))) \nonumber\\
&=& \pi_{u|_{l}} \cdot A+ B, \label{AB}
\end{eqnarray}
where
\begin{eqnarray}
A &=& d_{u|_{l}}\cdot (\psi_{u|_{l}}(\lambda(u))-\psi_{u|_{l}}(\lambda(v))); \label{defA}\\
B &=& \sum_{w\in\Sigma_*\setminus\{ u|_{l}\}} \pi_w\cdot d_w\cdot (\psi_w(\lambda(u))-\psi_w(\lambda(v))).\nonumber
\end{eqnarray}
By construction $A$ is deterministic, and $\pi_{u|_{l}}$ and $B$ are independent.

Since when $\mathbf{1}_u^{(b)}(\bar q,2\epsilon)=1$ we have $\mu_{\bar{q}}^{(k)}([u])\neq 0$, then~\eqref{gec} and the fact that $|u|=m\ge l+k$ yield 
\begin{equation}\label{lala}
\big|\psi_{u|_l}(\lambda(u))-\psi_{u|_{l}}(\lambda(v))\big| \ge \frac{c_{\psi,k}}{2}.
\end{equation}

For $u',v'\in \{0,1\}$ we can write
\begin{eqnarray}
&&F^{\text{pert}}_{\mu}(\lambda(uu'))-F^{\text{pert}}_{\mu}(\lambda(vv')) \nonumber \\
&=&\eta\cdot (F^{\text{pert}}_{\mu}(\lambda(u))-F^{\text{pert}}_{\mu}(\lambda(v)))+D=\eta\cdot (\pi_{u|_{l}}\cdot A+ B)+D\label{etaD}
\end{eqnarray}
where
\begin{eqnarray*}
\eta &=& \frac{\psi_{u|_{l}}(\lambda(uu'))-\psi_{u|_{l}}(\lambda(vv'))}{\psi_{u|_{l}}(\lambda(u))-\psi_{u|_{l}}(\lambda(v))};\\
D &=& \sum_{w\in\Sigma_*\setminus\{ u|_{l}\}} \pi_w\cdot d_w\cdot  \big(\psi_w(\lambda(uu'))-\psi_w(\lambda(vv'))-\eta\cdot (\psi_w(\lambda(u))-\psi_w(\lambda(v)))\big).
\end{eqnarray*}
We have that $\eta$ is deterministic, and $\pi_{u|_{l}}$ and $D$ are independent. Moreover, since $\psi$ is $r_0$-smooth, there exists a constant $C_\psi$ such that for any $x,y\in\mathbb{R}$ we have $|\psi(x)-\psi(y)|\le C_\psi|s-t|$. Due to~\eqref{lala}, this implies
\begin{eqnarray}
|\eta-1| &=& \left| \frac{\psi_{u|_{l}}(\lambda(uu'))-\psi_{u|_{l}}(\lambda(u))+\psi_{u|_{l}}(\lambda(v))-\psi_{u|_{l}}(\lambda(vv'))}{\psi_{u|_{l}}(\lambda(u))-\psi_{u|_{l}}(\lambda(v))}\right| \nonumber\\
&\le& \frac{2C_\psi}{c_{\psi,k}} \left(\Big|T_{u|_{l}}^{-1}(\lambda(uu'))-T_{u|_{l}}^{-1}(\lambda(u))\Big|+\Big|T_{u|_{l}}^{-1}(\lambda(v))-T_{u|_{l}}^{-1}(\lambda(vv'))\Big|\right) \nonumber\\
&\le & \frac{2C_\psi}{c_{\psi,k}}\cdot 2^{l}\cdot 2^{-m},\label{eta-1}
\end{eqnarray}
where we have used $|\lambda(u)-\lambda(uu')|\vee |\lambda(v)-\lambda(vv')| \le 2^{-m}$.

For $w\in\Sigma_*$ define the $\sigma$-algebra $\mathcal{A}_{w}=\sigma(\pi_u:u\in\Sigma_*\setminus\{w\})$.

By construction, $B$ and $D$ are $\mathcal{A}_{u|_{l}}$-measurable, thus are constant given $\mathcal{A}_{u|_{l}}$.

From assumption (A3) we know $\pi_{u|_{l}}$ has a bounded density function $f_{u|_{l}}$.

From~\eqref{indicatora},~\eqref{defA} and~\eqref{lala}, we have
\begin{equation}\label{A}
\mathbf{1}^{\text{ran}}_{u|_{l}}(\bar q,2\epsilon)\cdot |A|^{-1} \le \frac{2}{c_{\psi,k}}\cdot 2^{l(h^{(k)}_{\bar{q}}+2\epsilon)}.
\end{equation}

When $u,v\in \Sigma_m$ and $\mathbf{1}_p(u,v)=1$, we have
\begin{equation}\label{tuv}
\begin{cases}
|\lambda(u)-\lambda(v)-(\lambda(uu')-\lambda(vv'))|\le 2\cdot 2^{-m} \\
|\lambda(u)-\lambda(v)| \wedge |\lambda(uu')-\lambda(vv')|\ge 2^{-p-1}.
\end{cases}
\end{equation}

Since $\mathbf{1}_p(u,v)=1$ implies $v|_p\in\mathcal{N}(u|_p)$, by~\eqref{indicatorc} we have
\[
\mathbf{1}^{(c)}_{u|_p}(\bar q,2\epsilon)\cdot \sup_{s,t\in\bigcup_{w\in\mathcal{N}(u|_p)} [w]} |F^{\text{pert}}_{\mu}(\lambda(s))-F^{\text{pert}}_{\mu}(\lambda(t))| \le 2^{-p(h^{(k)}_{\bar{q}}-2\epsilon)}.
\]
This implies, when $\mathbf{1}^{(c)}_{u|_r}(\bar q,2\epsilon)=1$,
\begin{multline}
\Big( |F^{\text{pert}}_{\mu}(\lambda(u))-F^{\text{pert}}_{\mu}(\lambda(v))|\vee |F^{\text{pert}}_{\mu}(\lambda(uu'))-F^{\text{pert}}_{\mu}(\lambda(vv'))\big| \Big) \wedge 1 \\
\le 2^{-\mathbf{1}_{\{p\ge n\}}\cdot r(h^{(k)}_{\bar{q}}-2\epsilon)}:=\alpha.\label{alpha}
\end{multline}
Also, for the same reason, when $\mathbf{1}^{(c)}_{u}(\bar q,2\epsilon)\cdot \mathbf{1}^{(c)}_{v}(\bar q,2\epsilon)=1$,
\begin{multline}
\Big(|F^{\text{pert}}_{\mu}(\lambda(u))-F^{\text{pert}}_{\mu}(\lambda(uu'))\big|\vee \big|F^{\text{pert}}_{\mu}(t_{v})-F^{\text{pert}}_{\mu}(\lambda(vv'))\big| \Big)\\
\le 2^{-m(h^{(k)}_{\bar{q}}-2\epsilon)}:=\beta.\label{beta}
\end{multline}
These two inequalities with~\eqref{det},~\eqref{etaD} and ~\eqref{eta-1} imply that when $\mathbf{1}^{\text{ran}}_{u,v}(\bar q,\epsilon)=1$,
\begin{eqnarray}
|D|&\le& {\scriptstyle \big|F^{\text{pert}}_{\mu}(\lambda(uu'))-F^{\text{pert}}_{\mu}(t_{u})+F^{\text{pert}}_{\mu}(t_{v})-F^{\text{pert}}_{\mu}(\lambda(vv'))\big|+\big|\eta-1\big|\cdot \big|F^{\text{pert}}_{\mu}(t_{u})-F^{\text{pert}}_{\mu}(t_{v})\big|}\nonumber \\
&\le & 2\beta+ \frac{2C_\psi}{c_{\psi,k}}\cdot 2^{l}\cdot 2^{-m}\cdot \alpha\nonumber \\
&=& 2\cdot 2^{-m(h^{(k)}_{\bar{q}}-2\epsilon)}+  \frac{2C_\psi}{c_{\psi,k}}\cdot 2^{l}\cdot 2^{-m}\cdot 2^{-\mathbf{1}_{\{p\ge n\}}\cdot r(h^{(k)}_{\bar{q}}-2\epsilon)}\nonumber \\
&\le& 2\left(\frac{2C_\psi}{c_{\psi,k}}\cdot 2^{1+N_{\psi,k}}\right)\cdot 2^{-m(h^{(k)}_{\bar{q}}-2\epsilon)+r(1-\mathbf{1}_{\{p\ge n\}}\cdot (h^{(k)}_{\bar{q}}-2\epsilon))}\nonumber\\
&\le& C_D\cdot 2^{-m(h^{(k)}_{\bar{q}}-2\epsilon)+r},\label{D}
\end{eqnarray}
where $C_D=2\left(\frac{2C_\psi}{c_{\psi,k}}\cdot 2^{1+N_{\psi,k}}\right)$ and we have used $h^{(k)}_{\bar{q}}-2\epsilon\in(0,1)$.

\medskip

Recall that $l=r+1+N_{\psi,k}$. Now we have

\medskip

\begin{itemize}
\item[(I)] When $\gamma>1$, (also $\gamma\le 2$), due to~\eqref{kdeltak},~\eqref{kernel},~\eqref{A} and~\eqref{tuv},
\begin{eqnarray*}
&&\mathbb{E}\Big(\bar{\mathcal{K}}_\gamma(u,v) \Big| \mathcal{A}_{u|_{l}} \Big) \\
& \le & \int_{\mathbb{R}} \mathbf{1}_{\{ \bar{\mathcal{K}}_\gamma(u,v) = 1\}}\cdot f_{u|_{l}}(x)\ \mathrm{d}x + \int_{\mathbb{R}} \frac{ \mathbf{1}^{\text{ran}}_{u,v}(\bar q,\epsilon)\cdot  f_{u|_{l}}(x)}{(|A\cdot x+B|^2+|\lambda(u)-\lambda(v)|^2)^{\gamma/2}}\ \mathrm{d}x \\
&\le&1+ \int_{\mathbb{R}}  \mathbf{1}^{\text{ran}}_{u,v}(\bar q,\epsilon)\cdot |A|^{-1}|\lambda(u)-\lambda(v)|^{1-\gamma}\cdot \frac{ f_{u|_{l}}(\frac{|\lambda(u)-\lambda(v)|z-B}{A})}{(|z|^2+1)^{\gamma/2}}\ \mathrm{d} z\\
&\le& 1+ \frac{2}{c_{\psi,k}} \cdot 2^{l(h^{(k)}_{\bar{q}}+2\epsilon)} \cdot 2^{-(p+1)(1-\gamma)} \cdot \|f_{u|_l}\|_\infty \cdot \int_{\mathbb{R}} \frac{1}{(|z|^2+1)^{\gamma/2}}\ \mathrm{d} z \\
&= & 1+ \left(\int_{\mathbb{R}} \frac{\mathrm{d} z}{(|z|^2+1)^{\gamma/2}}\cdot\frac{2^{(1+N_{\psi,k})(h^{(k)}_{\bar{q}}+2\epsilon)+\gamma}}{c_{\psi,k}} \right)\cdot \|f_{u|_l}\|_\infty \cdot 2^{r(h^{(k)}_{\bar{q}}+2\epsilon)-p(1-\gamma)} \\
&\le& 2\left(\int_{\mathbb{R}} \frac{\mathrm{d} z}{(|z|^2+1)^{\gamma/2}}\cdot\frac{2^{(1+N_{\psi,k})(h^{(k)}_{\bar{q}}+2\epsilon)+\gamma}}{c_{\psi,k}} \right)\cdot C_r\cdot 2^{r(h^{(k)}_{\bar{q}}+2\epsilon)-p(1-\gamma)},
\end{eqnarray*}
where we recall that $C_r=\sup_{w\in\Sigma_{l}} \|f_w\|_\infty$;

\item[(II)] When $\gamma>1$, let
\[
\phi_\gamma(x,y)=(\big|F^{\text{pert}}_{\mu}(\lambda(u))-F^{\text{pert}}_{\mu}(\lambda(v))+x\big|^2+\big||\lambda(u)-\lambda(v)|+y\big|^2)^{-\gamma/2}.
\]
Then due to~\eqref{kdeltak},~\eqref{kernel},~\eqref{beta} and~\eqref{tuv}, we have
\begin{equation}
\Delta \bar{\mathcal{K}}_\gamma(u,v)
\le\int_{|y|\le 2\cdot 2^{-m}} \left|\frac{\partial }{\partial y}\phi_\gamma(0,y)\right|\ \mathrm{d} y + \int_{|x|\le \beta} \sup_{|y|\le 2\cdot 2^{-m}} \left|\frac{\partial }{\partial x}\phi_\gamma(x,y)\right|\ \mathrm{d}x, \label{con2.1}
\end{equation}
where we have used that $|a\vee 1-b\vee 1|\le |a-b|$ for any $a,b\ge 0$. It is not difficult to check that 
\[
\left|\frac{\partial }{\partial y}\phi_\gamma(0,y)\right|\vee \left|\frac{\partial }{\partial x}\phi_\gamma(x,y)\right| \le \gamma\cdot\big||\lambda(u)-\lambda(v)|+y\big|^{-\gamma-1}.
\]
In fact, we have
\begin{eqnarray*}
\left|\frac{\partial }{\partial y}\phi_\gamma(0,y)\right| 
&\le &\frac{\gamma\cdot \big||\lambda(u)-\lambda(v)|+y\big|}{\Big(\big|F^{\text{pert}}_{\mu}(\lambda(u))-F^{\text{pert}}_{\mu}(\lambda(v))\big|^2+\big||\lambda(u)-\lambda(v)|+y\big|^2\Big)^{1+\frac{\gamma}{2}}}\\
&\le &\frac{\gamma\cdot \big||\lambda(u)-\lambda(v)|+y\big|}{\big||\lambda(u)-\lambda(v)|+y\big|^{2+\gamma}}=\gamma\cdot \big||\lambda(u)-\lambda(v)|+y\big|^{-1-\gamma};
\end{eqnarray*}
and
\begin{eqnarray*}
&&\left|\frac{\partial }{\partial x}\phi_\gamma(x,y)\right| \\
&\le &\frac{\gamma\cdot \big|F^{\text{pert}}_{\mu}(\lambda(u))-F^{\text{pert}}_{\mu}(\lambda(v))+x\big|}{\Big(\big| F^{\text{pert}}_{\mu}(\lambda(u))-F^{\text{pert}}_{\mu}(\lambda(v))+x\big|^2+\big||\lambda(u)-\lambda(v)|+y\big|^2\Big)^{1+\frac{\gamma}{2}}}\\
&\le &\frac{\gamma\cdot \big|F^{\text{pert}}_{\mu}(\lambda(u))-F^{\text{pert}}_{\mu}(\lambda(v))+x\big| }{\big|F^{\text{pert}}_{\mu}(\lambda(u))-F^{\text{pert}}_{\mu}(\lambda(v))+x\big|^2+\big||\lambda(u)-\lambda(v)|+y\big|^2} \cdot \frac{1}{\big||\lambda(u)-\lambda(v)|+y\big|^{\gamma}}\\
&\le & \frac{\gamma}{2\big||\lambda(u)-\lambda(v)|+y\big|}\cdot \frac{1}{\big||\lambda(u)-\lambda(v)|+y\big|^{\gamma}} \le \gamma\cdot \big||\lambda(u)-\lambda(v)|+y\big|^{-1-\gamma},
\end{eqnarray*}
where we have used that $\frac{a}{a^2+b^2}\le \frac{1}{2b}$ for any $a,b>0$. This together with $|\lambda(u)-\lambda(v)|\ge 2^{-p-1}$, $m>p+1$ and $\gamma\le 2$ yields 
\begin{eqnarray*}
\Delta \bar{\mathcal{K}}_\gamma(u,v)  &\le& \gamma\cdot(2^{-p-1}-2\cdot 2^{-m})^{-\gamma-1}\cdot(2\cdot 2^{-m}+2\beta)\\
&\le& \gamma\cdot 2^{(p+2)(\gamma+1)}\cdot(2\cdot 2^{-m}+2 \cdot 2^{-m(h^{(k)}_{\bar{q}}-2\epsilon)}) \\
&\le& \gamma\cdot 2^{(p+2)(\gamma+1)}\cdot(4\cdot 2^{-m(h^{(k)}_{\bar{q}}-2\epsilon)}) \ \ \qquad (\text{since}\ h^{(k)}_{\bar{q}}-2\epsilon <1) \\
&\le& \left(4\gamma 2^{2(\gamma+1)}\right)\cdot C_r\cdot 2^{p\cdot 3-m\cdot (h^{(k)}_{\bar{q}}-2\epsilon)} .
\end{eqnarray*}

\item[(III)] When $\gamma<1$, due to~\eqref{kdeltak},~\eqref{kernel},~\eqref{A} and~\eqref{alpha},
\begin{eqnarray*}
&&\mathbb{E}\Big(\bar{\mathcal{K}}_{\gamma}(u,v) \Big| \mathcal{A}_{u|_{l}} \Big) \\
& \le & \int_{|A\cdot x+B|\le \alpha} \mathbf{1}_{\{\bar{\mathcal{K}}_{\gamma}(u,v)=1\}}\cdot f_{u|_{l}}(x)\ \mathrm{d}x+\int_{|A\cdot x+B|\le \alpha} \mathbf{1}^{\text{ran}}_{u,v}(\bar q,\epsilon)\cdot \frac{f_{u|_{l}}(x)}{|A\cdot x+B|^{\gamma}}\ \mathrm{d}x \\
&\le& 1+ \int_{|z|\le\alpha} \mathbf{1}^{\text{ran}}_{u,v}(\bar q,\epsilon) \cdot |A|^{-1} \cdot \frac{f_{u|_{l}}(\frac{z-B}{A})}{|z|^{\gamma}}\ \mathrm{d} z\\
&\le & 1+\frac{2}{c_{\psi,k}} \cdot 2^{l(h^{(k)}_{\bar{q}}+2\epsilon)} \cdot \|f_{u|_l}\|_\infty \cdot \int_{|z|\le\alpha} \frac{1}{|z|^{\gamma}}\ \mathrm{d} z \\
&=& 1+\frac{2}{c_{\psi,k}} \cdot 2^{l(h^{(k)}_{\bar{q}}+2\epsilon)} \cdot \|f_{u|_l}\|_\infty \cdot 2 \alpha^{1-\gamma}\\
&\le& 2\left(\frac{2^{(1+N_{\psi,k})(h^{(k)}_{\bar{q}}+2\epsilon)}}{c_{\psi,k}}\right)  \cdot \|f_{u|_l}\|_\infty\cdot  2^{r(h^{(k)}_{\bar{q}}+2\epsilon)-\mathbf{1}_{\{p\ge n\}}\cdot r(h^{(k)}_{\bar{q}}-2\epsilon)(1-\gamma)} \\
&=& 2\left(\frac{2^{(1+N_{\psi,k})(h^{(k)}_{\bar{q}}+2\epsilon)}}{c_{\psi,k}}\right) \cdot \|f_{u|_l}\|_\infty \cdot 2^{\mathbf{1}_{\{p< n\}}r(h^{(k)}_{\bar{q}}-2\epsilon)(1-\gamma)} \cdot 2^{r(h^{(k)}_{\bar{q}}\gamma+(4-2\gamma)\epsilon)} \\
&\le& 2\left(\frac{2^{(1+N_{\psi,k})(h^{(k)}_{\bar{q}}+2\epsilon)}}{c_{\psi,k}}\right) \cdot \|f_{u|_l}\|_\infty \cdot 2^{\mathbf{1}_{\{p< n\}}r} \cdot 2^{r(h^{(k)}_{\bar{q}}\gamma+4\epsilon)} \\
&\le& 2\left(\frac{2^{(1+N_{\psi,k})(h^{(k)}_{\bar{q}}+2\epsilon)}}{c_{\psi,k}}\right) \cdot C_r \cdot 2^{n} \cdot 2^{r(h^{(k)}_{\bar{q}}\gamma+4\epsilon)} ;
\end{eqnarray*}

\item[(IV)] When $\gamma<1$, due to~\eqref{kdeltak},~\eqref{kernel},~\eqref{A} and~\eqref{D}, by using again $|a\vee 1-b\vee 1|\le |a-b|$ for any $a,b\ge 0$, we have
\begin{eqnarray*}
&&\mathbb{E}\Big(\Delta \bar{\mathcal{K}}_\gamma(u,v)\Big| \mathcal{A}_{u|_{l}} \Big) \\
&\le& \int_{\mathbb{R}} \mathbf{1}^{\text{ran}}_{u,v}(\bar q,\epsilon)\cdot \Big|\frac{1}{|\eta(A\cdot x+B)+D|^\gamma}-\frac{1}{|A\cdot x+B|^\gamma}\Big|f_{u|_{l}}(x)\ \mathrm{d} x \\
&=&  \int_{\mathbb{R}} \mathbf{1}^{\text{ran}}_{u,v}(\bar q,\epsilon)\cdot |A|^{-1}\cdot |D|^{1-\gamma} \cdot \Big|\frac{1}{|\eta\cdot z+1|^\gamma}-\frac{1}{|z|^\gamma}\Big|f_{u|_{l}}(\frac{D\cdot z-B}{A})\ \mathrm{d} z \\
&\le&  \frac{2}{c_{\psi,k}} 2^{l(h^{(k)}_{\bar q}+2\epsilon)} \cdot (C_D\cdot 2^{-m(h^{(k)}_{\bar{q}}-2\epsilon)+r})^{1-\gamma} \cdot \|f_{u|_l}\|_\infty \cdot \int_{\mathbb{R}} \Big|\frac{1}{|\eta\cdot z+1|^\gamma}-\frac{1}{|z|^\gamma}\Big| \ \mathrm{d} z   \\
&=&\left(\frac{2^{1+(1+N_{\psi,k})(h^{(k)}_{\bar{q}}+2\epsilon)}C_D^{1-\gamma}}{c_{\psi,k}}  \cdot \int_{\mathbb{R}} \Big|\frac{1}{|\eta\cdot z+1|^\gamma}-\frac{1}{|z|^\gamma}\Big| \ \mathrm{d} z \right) \\
&&\qquad\qquad\qquad\qquad\qquad\qquad \cdot \|f_{u|_l}\|_\infty \cdot 2^{r(h^{(k)}_{\bar q}+2\epsilon +1-\gamma)-m(h^{(k)}_{\bar{q}}-2\epsilon)(1-\gamma)} \\
&\le&\left(\frac{2^{1+(1+N_{\psi,k})(h^{(k)}_{\bar{q}}+2\epsilon)}C_D^{1-\gamma}}{c_{\psi,k}} \cdot \int_{\mathbb{R}} \Big|\frac{1}{|\eta\cdot z+1|^\gamma}-\frac{1}{|z|^\gamma}\Big| \ \mathrm{d} z\right) \cdot C_r \cdot 2^{r\cdot 3-m(h^{(k)}_{\bar{q}}-2\epsilon)} .
\end{eqnarray*}

\end{itemize}

Now, since 
\[
\int_{\mathbb{R}} \frac{\mathrm{d} z}{(|z|^2+1)^{\gamma/2}}\ (\gamma>1) \text{ and } \int_{\mathbb{R}} \Big|\frac{1}{|\eta\cdot z+1|^\gamma}-\frac{1}{|z|^\gamma}\Big| \mathrm{d} z  \ (\gamma<1)
\]
are both finite (notice that $\eta$ is bounded away from $0$ and infinity uniformly), and $h^{(k)}_{\bar q}$ are chosen between $s_0-1/p_0$ and $1$, we can easily find a constant $c_\gamma$ such that
\begin{equation*}
\max \left( \left\{
\begin{array}{l}
2\left(\int_{\mathbb{R}} \frac{\mathrm{d} z}{(|z|^2+1)^{\gamma/2}}\cdot\frac{2^{(1+N_{\psi,k})(h^{(k)}_{\bar{q}}+2\epsilon)+\gamma}}{c_{\psi,k}} \right), 2\left(\frac{2^{(1+N_{\psi,k})(h^{(k)}_{\bar{q}}+2\epsilon)}}{c_{\psi,k}}\right),\\
\left(4\gamma 2^{2(\gamma+1)}\right), \left(\frac{2^{1+(1+N_{\psi,k})(h^{(k)}_{\bar{q}}+2\epsilon)}C_D^{1-\gamma}}{c_{\psi,k}} \cdot \int_{\mathbb{R}} \Big|\frac{1}{|\eta\cdot z+1|^\gamma}-\frac{1}{|z|^\gamma}\Big| \ \mathrm{d} z\right) 
\end{array}
\right.\right) \le c_\gamma.
\end{equation*}
This gives us the conclusion.

\end{proof}

\subsection{Proof of Lemma~\ref{counting}.}

\begin{proof}
Recall~\eqref{det} that
\[
\mathbf{1}^{\text{det}}_{u,v}(\bar q,\epsilon) = \mathbf{1}^{(b)}_{u|_r}(\bar q,2\epsilon)\cdot \mathbf{1}^{(b)}_{v|_r}(\bar q,2\epsilon)\cdot \mathbf{1}^{(b)}_{u}(\bar q,2\epsilon)\cdot \mathbf{1}^{(b)}_{v}(\bar q,2\epsilon).
\]
Let
\[
S_{p,m}=\sum_{u,v\in \Sigma_m} \mathbf{1}_p(u,v) \cdot \mathbf{1}^{(b)}_{u|_r}(\bar q,2\epsilon)\cdot \mathbf{1}^{(b)}_{v|_r}(\bar q,2\epsilon)\cdot \mathbf{1}^{(b)}_{u}(\bar q,2\epsilon)\cdot \mathbf{1}^{(b)}_{v}(\bar q,2\epsilon).
\]

Recall that $r=p\vee n$. For any $u\in\Sigma_m$ we write $u=u|_r\cdot u'$ with $u'\in\Sigma_{m-r}$. Since $ \mathbf{1}_p(u,v)$ only depends on $u|_r$, $v|_r$, we can write
\[
S_{p,m}=\sum_{u_r,v_r\in \Sigma_r} \mathbf{1}_p(u_r,v_r) \cdot \mathbf{1}^{(b)}_{u|_r}(\bar q,2\epsilon)\cdot \mathbf{1}^{(b)}_{v|_r}(\bar q,2\epsilon)\cdot \sum_{u',v'\in \Sigma_{m-r}}\mathbf{1}^{(b)}_{u|_r\cdot u'}(\bar q,2\epsilon)\cdot \mathbf{1}^{(b)}_{v|_r\cdot v'}(\bar q,2\epsilon).
\]

Recall (see~\eqref{indicatorb}) that
\[
\mathbf{1}^{(b)}_{u|_r\cdot u'}(\bar q,2\epsilon)=\mathbf{1}_{\left\{\mu_{\bar q}^{(k)}([u|_r\cdot u'])\in [2^{-m(D^{(k)}_{\bar q}+2\epsilon)}, 2^{-m(D^{(k)}_{\bar q}-2\epsilon)}]\right\}}.
\]
Thus
\[
\mathbf{1}^{(b)}_{u|_r\cdot u'}(\bar q,2\epsilon) \le 2^{m(D^{(k)}_{\bar q}+2\epsilon)} \cdot \mu_{\bar q}^{(k)}([u|_r\cdot u']).
\]
This implies that
\begin{eqnarray*}
&&\sum_{u',v'\in \Sigma_{m-r}}\mathbf{1}^{(b)}_{u|_r\cdot u'}(\bar q,2\epsilon)\cdot \mathbf{1}^{(b)}_{v|_r\cdot v'}(\bar q,2\epsilon) \\
&\le& 2^{2m(D^{(k)}_{\bar q}+2\epsilon)} \cdot \sum_{u',v'\in \Sigma_{m-r}} \mu_{\bar q}^{(k)}([u|_r\cdot u']) \cdot \mu_{\bar q}^{(k)}([v|_r\cdot v'])\\
&\le& 2^{2m(D^{(k)}_{\bar q}+2\epsilon)} \cdot \mu_{\bar q}^{(k)}([u|_r]) \cdot \mu_{\bar q}^{(k)}([v|_r]).
\end{eqnarray*}
Thus by the fact that given $u|_r$ in $\Sigma_r$, there are at most $3^{r-p+1}$ many $v|_r$ in $\Sigma_r$ such that $\mathbf{1}_{p}(u|_r,v|_r)=1$, we have
\begin{eqnarray*}
&&S_{p,m} \\
&\le& 2^{2m(D^{(k)}_{\bar q}+2\epsilon)}\cdot \sum_{u|_r,v|_r\in \Sigma_r} \mathbf{1}_p(u|_r,v|_r) \cdot \mathbf{1}^{(b)}_{u|_r}(\bar q,2\epsilon)\cdot \mathbf{1}^{(b)}_{v|_r}(\bar q,2\epsilon)\cdot \mu_{\bar q}^{(k)}([u|_r]) \cdot \mu_{\bar q}^{(k)}([v|_r])\\
&\le& 2^{2m(D^{(k)}_{\bar q}+2\epsilon)-r(D^{(k)}_{\bar q}-2\epsilon)}\cdot \sum_{u|_r,v|_r\in \Sigma_r} \mathbf{1}_p(u|_r,v|_r) \cdot \mu_{\bar q}^{(k)}([u|_r]) \\
&\le& 2^{2m(D^{(k)}_{\bar q}+2\epsilon)-r(D^{(k)}_{\bar q}-2\epsilon)}\cdot 3^{r-p+1}\cdot \sum_{u|_r\in \Sigma_r} \mu_{\bar q}^{(k)}([u|_r]) \\
&=& 2^{2m(D^{(k)}_{\bar q}+2\epsilon)-r(D^{(k)}_{\bar q}-2\epsilon)}\cdot 3^{r-p+1} \\
\end{eqnarray*}

\end{proof}

\medskip

\noindent
{\bf Acknowledgement.}

\medskip

\noindent
The author would like to thank gratefully his supervisor Professor Julien Barral for having suggested him to study the graph and range singularity spectra of random wavelet series, and his help in achieving this paper. He would also like to thank Doctor Yanhui Qu for some valuable discussions.

\end{document}